\makeatletter\@addtoreset{equation}{section}\makeatother
\newtheorem{theorem}{Theorem}
\newtheorem{remark}{Remark}[section]
\newtheorem{lemma}{Lemma}[section]
\newtheorem{proposition}[lemma]{Proposition}
\newtheorem{definition}{Definition}
\newcommand{\dx}{{\rm d} {x}}
\newcommand{\dt}{{\rm d} t }
\newcommand{\N}{\mbox{\F N}}
\newcommand{\ds}{\displaystyle}
\newcommand{\beq}[1]{\begin{equation} \label{#1}\ds}
\newcommand{\eeq}{\end{equation}}
\newcommand{\bml}[1]{\beq{#1} \begin{array}{c}\ds}
\newcommand{\eml}{\end{array}\eeq}
\newcommand{\beqq}{\begin{equation*}\ds}
\newcommand{\eeqq}{\end{equation*}}
\newcommand{\bmll}{\beqq \begin{array}{c}\ds}
\newcommand{\emll}{\end{array}\eeqq}
\newcommand{\abs}[1]{\ensuremath{\left| #1 \right|}}
\newcommand{\norm}[1]{\ensuremath{\left\| #1 \right\|}}
\def \d{\mathrm{d}}
\newcommand{\R}{\mathbb{R}}
\def \N{\mathbb{N}}
\begin{document}

\author{Luigi C. Berselli and Elisabetta Chiodaroli}

\title{\bf On the energy equality for the 3D Navier--Stokes equations}

\date{}

\maketitle

\centerline{Dipartimento di Matematica, Universit\`a di Pisa}

\centerline{Via F. Buonarroti 1/c, 56127 Pisa, Italy}

\centerline{email: luigi.carlo.berselli@unipi.it, elisabetta.chiodaroli@unipi.it}

\begin{abstract}
  In this paper we study the problem of energy conservation for the solutions of the
  initial boundary value problem associated to the 3D Navier-Stokes equations, with
  Dirichlet boundary conditions. 

  First, we consider Leray-Hopf weak solutions and we prove some new criteria, involving
  the gradient of the velocity. Next, we compare them with the existing literature in
  scaling invariant spaces and with the Onsager conjecture.
  
  Then, we consider the problem of energy conservation for very-weak solutions,
  proving energy equality for distributional solutions belonging to the so-called Shinbrot
  class. A possible explanation of the role of this classical class of solutions, which is
  not scaling invariant, is also given.

\medskip

\textbf{MSC 2010} 35Q30, 35D30, 76D07

\medskip

\textbf{Keywords:} Energy conservation, Navier-Stokes equations, weak and very weak solutions.
\end{abstract}
\section{Introduction}
We consider the Navier-Stokes equations in a bounded domain $\Omega\subset\R^3$ with
smooth boundary $\partial \Omega$, and vanishing Dirichlet boundary conditions
\begin{equation} 
  \label{eq:NS}
  \left\{\begin{aligned}
      \partial_t u - \Delta u +(u \cdot \nabla)\, u + \nabla p &=0 \qquad \mbox{in
      }\Omega\times (0,T),
      \\
      \nabla \cdot u&=0 \qquad \mbox{in }\Omega\times (0,T),
      \\
      u&=0 \qquad \mbox{on }\partial\Omega\times (0,T) ,
      \\
      u(\cdot, 0)&=u_0 \qquad \mbox{in }\Omega.
    \end{aligned}\right.
\end{equation}
We study the problem in a bounded domain, but the results are valid also in more general
domains, with minor changes in the proofs.  For simplicity we treat the problem with unit
viscosity and with vanishing external force, but both assumptions are unessential.

We recall that starting with the works of Leray~\cite{Ler1934} and Hopf~\cite{Hop1951} the
notion of \textit{kinetic energy} became of fundamental relevance in the analysis of the
Navier-Stokes equations, since its boundedness represents the very basic \textit{a priori}
estimate, which allows to start the machinery which is used to construct the so-called
Leray-Hopf weak solutions. It is well-known that solutions in this class exist for all
positive times, but their regularity and even their uniqueness are still unproved at
present.  The balance of the kinetic energy formally follows by multiplying the equations by the
solutions itself and by performing some integration by parts, once the needed computations are allowed. On the other hand, the work of Leray
and Hopf showed that the energy inequality below (opposed to the
equality~\eqref{eq:energy-equality}) 
\begin{equation*}
  \frac{1}{2}\int_\Omega|u(t,x)|^2\dx+\int_0^t\int_\Omega|\nabla u(s,x)|^2\,\dx{\rm d}s  
  \leq\frac{1}{2}\int_\Omega|u_0(x)|^2\d x\qquad  \forall\,t\geq0, 
\end{equation*}
can be inferred for weak solutions in the class where globally in time
existence holds. In the latter inequality the sign ``less or equal'' is due to the lack of
regularity of weak solutions and it comes from a limiting process on smoother functions based on weak convergence. We note that under some restrictions on the domain also the
\textit{strong energy inequality} is valid, but this is inessential for the present
research, see e.g.~\cite{Gal2000a}.

The first attempts to determine sufficient conditions implying the validity of the energy
equality~\eqref{eq:energy-equality} in the class of weak solutions --especially in
connection with the problem of uniqueness-- came with a series of papers by
Lions~\cite{Lio1960} and Prodi~\cite{Pro1959}, where the criterion $u\in
L^4(0,T;L^4(\Omega))$ has been identified. The connections with the 2D case and the
results of Kiselev and Lady\v{z}henskaya~\cite{KL1957} highlighted the similarities and
differences between the two and three-dimensional case. A few years later, in the wake of
the celebrated survey of Serrin~\cite{Ser1963}, Shinbrot~\cite{Shi1974} derived the
criterion
\begin{equation}
  \label{eq:Shinbrot}
  u\in L^r(0,T;L^s(\Omega))\qquad \text{with}\quad
  \frac{2}{r}+\frac{2}{s}=1\qquad\text{for}\quad s\geq4,  
\end{equation}
which contains the condition $L^4(0,T;L^4(\Omega))$ as a sub-case, and which extends to a
wide range of exponents the condition for energy conservation.
\begin{remark}
  The condition derived by Shinbrot reads more precisely as
  $\frac{2}{r}+\frac{2}{s}\leq1$. The most interesting is the limiting case and here we
  refer to condition~\eqref{eq:Shinbrot}, since the same results are straightforward if
  the sum is strictly smaller than one.
\end{remark}
It is extremely relevant to observe that the condition~\eqref{eq:Shinbrot} does not depend
on the space dimension $n$ (in particular it is valid also for $n>3$) and moreover --very
remarkably-- it is not scaling invariant, we will elaborate on this in the sequel. We
observe that the results of Lady\v{z}henskaya, Prodi, and Serrin (popularized especially
after the appearance of~\cite{Ser1963}) showed regularity of weak solutions (and hence
validity of the energy equality) if the scaling invariant condition
\begin{equation}
  \label{eq:Serrin}
  u\in L^r(0,T;L^s(\Omega))\qquad \text{with}\quad \frac{2}{r}+\frac{3}{s}=1\qquad
  \text{for}\quad s>3, 
\end{equation}
holds true (see also Sohr~\cite{Soh1984}). A nice survey of these classical results can be
found in Galdi~\cite{Gal2000a}.

The fact that Shinbrot condition does not fit in with the framework of scaling invariance
(and also that~\eqref{eq:Shinbrot} reduces to~\eqref{eq:Serrin} as $s\to+\infty$) makes
the understanding and the explanation of~\eqref{eq:Shinbrot} more involved. We will give
later on in Section~\ref{sec:comparison-very-weak} a possible interpretation, which is
based on a different scaling and which highlights the importance of the convective term as
responsible of the non-linear character of the equations, which seems in this case more
relevant than the non-local nature induced by the divergence-free constraint and by the
pressure. Indeed, it seems that the ``hyperbolic'' part of the equations, plays a
fundamental role in the redistribution of the energy, not at the Fourier level, but in a
sort of regularity exchange, see Section~\ref{sec:comparison-very-weak}.

The essential difference between the energy equality and the energy inequality is the
presence of anomalous dissipation due to the presence of non-linearity. The dissipation
phenomenon is expected to be connected with the possible roughness of the solutions.  The
importance of energy conservation/dissipation (especially in the limit
of vanishing viscosity) came from Onsager's work~\cite{Ons1949}. Moreover, for the
Navier-Stokes equations the possible connection between energy conservation and uniqueness
of weak solutions still represent an interesting open problem. Furthermore, we observe that
the energy equality, will imply in the case of Galerkin approximation to the solution the 
convergence of the norms $\|u^m\|_{L^2(0,T;H^1(\Omega)}$, which combined with the weak
convergence will produce the strong convergence. This will have also relevant consequences
when these methods are used in the numerical approximation of the solutions.

The famous Onsager conjecture for the Euler and Navier-Stokes equations predicts the
threshold regularity for energy conservation.  In this direction, energy conservation for
$C^{0,1/3}$-solutions of the Euler equations has been recently addressed in the work of
Isett~\cite{Ise2018} and also Buckmaster, De Lellis, Sz\'ekelyhidi, and
Vicol~\cite{BDLSV}.  On the other hand the role of Besov-H\"older continuous spaces in the
energy conservation of the viscous Navier-Stokes equations was studied by Cheskidov,
Constantin, Friedlander and Shvydkoy~\cite{CCFS2008}, who proved that weak solutions in
the following class
\begin{equation*}
  u\in L^3(0,T;B^{1/3}_{3,\infty}(\R^{3})),
\end{equation*}
conserve the energy. (Cf. also Constantin, E, and Titi~\cite{CET1994} for the Euler
equations) The latter result has been recently extended by Cheskidov and Luo~\cite{CL2018}
to the class
\begin{equation*}
  u\in L^\beta_w(0,T;B^{\frac{2}{\beta}+\frac{2}{p}-1}_{p,\infty}(\R^{3}))
  \qquad\text{with}\quad \frac{2}{p}+\frac{1}{\beta}<1  \qquad\text{for}\quad
  1\leq\beta<p\leq\infty.
\end{equation*}
where $L^{\beta}_{w}$ denotes the weak (Marcinkiewicz) space, while $B^{s}_{p,q}(\R^3)$ are the
standard Besov spaces.

A similar approach, based on Fourier methods, allowed Cheskidov, Friedlander, and
Shvydkoy~\cite{CFS2010} to prove the following sufficient condition for energy
conservation (here $A$ denotes the Stokes operator associated to the Dirichlet boundary
conditions)
\begin{equation*}
  A^{5/12}u\in L^{3}(0,T;L^{2}(\Omega)),
\end{equation*}
which turns out to be --in terms of scaling-- less strict than Shinbrot
one~\eqref{eq:Shinbrot}. In fact, the latter criterion by Cheskidov, Friedlander, and
Shvydkoy is \textit{equivalent in terms of scaling} to $u\in
L^3(0,T;L^{9/2}(\Omega))$. The comparison and discussion of the various results will be
given in detail in Section~\ref{sec:comparison-weak}.  We also recall that
Farwig~\cite{Far2014} proved the following sufficient condition for energy conservation
\begin{equation*}
  A^{1/4}u\in L^{3}(0,T;L^{18/7}(\Omega)),
\end{equation*}
which turns out to have the same scaling.

Our first aim is to improve the above results by looking for regularity requirements
sufficient to prove energy conservation and weaker than the existing ones. The
improvements coming from our results can be measured in terms of scaling, see
Theorem~\ref{thm:grad-ranges}. Moreover, we will use tools of functional analysis
accessible to an extremely wide range of researchers, since we mainly focus on showing
conditions involving usual derivatives in Lebesgue spaces, without entering the technical
difficulties of more complex functional settings. This complies with the spirit of looking
for refinements of basic results in classical functional spaces. Possible further
developments in slightly sharper, but more complicated spaces (but at the same level of
scaling) are not excluded, but beyond our goals.

In order to present our first main result we recall, for the reader not acquainted with
all the technicalities necessary when dealing with genuine Leray-Hopf weak solutions of
the Navier-Stokes equations, that one main step in the analysis of the energy equality is
to rigorously prove the following equality
\begin{equation}
  \label{eq:fundamental} 
  \int_0^T\int_\Omega (u\cdot \nabla)\, u \cdot u\,\dx\d t =0.
\end{equation}
The latter equality is \textit{formally} valid if $u$ is a divergence-free vector field
tangential to the boundary. The calculations are justified once $u$ is smooth enough to
ensure that the above space-time integral is finite, but this is not the case if $u$ is a
weak solution in the 3D case.

The novelty of our approach is to look for conditions involving the gradient of the
velocity, instead of the velocity itself, which allow to conclude that the above integral is
finite.

The first result of this paper is the following criterion. 
\begin{theorem}
  \label{thm:grad-ranges}
  Let $u_0\in H$ and let $u$ be a Leray-Hopf weak solution of~\eqref{eq:NS}
  corresponding to $u_0$ as initial datum.  Let us assume that one the following conditions
  is satisfied
\begin{itemize}
  \item[(i)]  $\nabla u\in
  L^\frac{q}{2q-3}(0,T;L^q(\Omega))$,\qquad for\quad  $\frac{3}{2}< q<\frac{9}{5}$;
  \item[(ii)] $\nabla u\in L^\frac{5q}{5q-6}(0,T;L^q(\Omega))$,\qquad for \quad$\frac{9}{5}\leq q
    \leq 3$;
  \item[(iii)] $\nabla u\in L^{1+\frac{2}{q}}(0,T;L^q(\Omega))$, \qquad for\quad  $q> 3$.
  \end{itemize}
  Then, the velocity $u$ satisfies the energy equality
\begin{equation}
  \label{eq:energy-equality}
  \frac{1}{2}\int_\Omega|u(t,x)|^2\dx+\int_0^t\int_\Omega|\nabla u(s,x)|^2\,\dx{\rm d}s 
  =\frac{1}{2}\int_\Omega|u_0(x)|^2\d x\qquad\forall\,t\geq0.
\end{equation}
\end{theorem}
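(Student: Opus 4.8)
The plan is to establish the fundamental cancellation~\eqref{eq:fundamental} by a mollification/approximation argument, and to show that the hypotheses (i)--(iii) guarantee enough integrability of the convective term $(u\cdot\nabla)u\cdot u$ to justify the limiting process. The first step is to record the basic regularity available for any Leray--Hopf weak solution, namely $u\in L^\infty(0,T;L^2(\Omega))\cap L^2(0,T;H^1_0(\Omega))$. Combining the energy bound in $L^\infty_tL^2_x$ with the assumed bound on $\nabla u$ via Sobolev embedding and interpolation should place $u$ itself in a suitable space-time Lebesgue class, and the idea is to verify that in each of the three exponent ranges the product $u\,\otimes u\,:\,\nabla u$ is integrable over $\Omega\times(0,T)$.

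\smallskip

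\noindent\textbf{The key computation} I would carry out is a dimensional/interpolation check. For $\nabla u\in L^a(0,T;L^q(\Omega))$ one gets, by Sobolev embedding, control of $u$ in $L^a(0,T;L^{q^*}(\Omega))$ with $q^*=3q/(3-q)$ when $q<3$; interpolating this with the energy space $L^\infty(0,T;L^2(\Omega))$ yields $u$ in a range of mixed norms. To make sense of $\int_0^T\!\int_\Omega |u|^2|\nabla u|\,\dx\d t$ one applies H\"older in space to pair $|\nabla u|\in L^q_x$ with $|u|^2\in L^{q'}_x$, which requires $u\in L^{2q'}_x=L^{2q/(q-1)}_x$, and then H\"older in time with the conjugate exponents matching the temporal integrability $a$ of $\nabla u$ against the interpolated temporal integrability of $u$. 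The three cases (i)--(iii) correspond precisely to the three regimes in which this double H\"older balancing closes: the thresholds $q=9/5$ and $q=3$ are exactly where the binding constraint switches between the Sobolev exponent $q^*$ being available, the interpolation with the $L^2$ energy becoming the active constraint, and (for $q>3$) $u$ being controlled in $L^\infty_x$ so that only $|u|^2|\nabla u|\in L^1$ through a direct estimate is needed. I would present this as a single scaling computation and then read off the admissible time-exponents $a$ in each window.

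\smallskip

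\noindent\textbf{The main obstacle} is not the integrability count but the rigorous justification of~\eqref{eq:fundamental}: one cannot simply integrate by parts since a generic Leray--Hopf solution is not an admissible test function for its own equation. The standard device is to regularize, either by mollifying $u$ in space (respecting the divergence-free and boundary conditions, e.g. via the Stokes/Helmholtz projection) or by using the Galerkin approximants, test the equation against the regularized velocity, and pass to the limit. The integrability established above is exactly what guarantees that the commutator/approximation error in the nonlinear term vanishes, so that the limit of the convective integral equals $\int_0^T\!\int_\Omega(u\cdot\nabla)u\cdot u\,\dx\d t$, which is zero by the antisymmetry $\int_\Omega(u\cdot\nabla)v\cdot v\,\dx=-\tfrac12\int_\Omega(\nabla\cdot u)|v|^2\,\dx=0$ valid once the product is integrable and $u$ is divergence-free and tangent to $\partial\Omega$. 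Care must be taken near the boundary, where the mollification and the vanishing Dirichlet condition interact; handling the boundary layer correctly is the delicate technical point. Once~\eqref{eq:fundamental} is secured, multiplying the equation by $u$ and integrating yields the energy balance~\eqref{eq:energy-equality}, upgrading Leray's inequality to an equality.
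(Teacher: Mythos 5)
Your proposal follows essentially the same route as the paper's proof: the identical space--time H\"older/interpolation balancing (pairing $\nabla u\in L^q_x$ against $|u|^2\in L^{q'}_x$, i.e. $u\in L^{2q'}_x$, and interpolating that norm between $L^{q^*}$, $L^6$, $L^2$ and $L^\infty$ according to the range, with exactly the thresholds $q=9/5$ and $q=3$ where the active constraint switches), followed by a regularize-and-pass-to-the-limit argument reducing the energy equality to the vanishing of the convective integral in~\eqref{eq:fundamental}. The only substantive difference is implementational: the paper mollifies in \emph{time} only and approximates $u$ by smooth, compactly supported, divergence-free fields via density in $L^2(0,T;V)\cap L^p(0,T;W^{1,q}_0(\Omega))$, so that the cancellation $\int_0^{t_0}((u\cdot\nabla)\,u_m,u_m)\,\d t=0$ holds exactly for the approximants and the boundary-layer difficulty you flag for spatial mollification with Dirichlet conditions never arises.
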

These results will be proved in Section~\ref{sec:weak}, but the comparison with the
previous ones known in literature will be be discussed already in
Section~\ref{sec:comparison-weak}.  The overall strategy is, to assume that $\nabla u\in
L^q(\Omega)$ for some fixed $q$, and to employ all the available regularity coming from
the definition of weak solutions, so to find the smaller exponent in the time variable
which allows to show~\eqref{eq:fundamental}.
\begin{remark}
  A similar approach had already been implemented also by Farwig and
  Taniuchi~\cite{FT2010}, focusing on the degree $k$ of (fractional) smoothness of $u$ and
  on the correct definition of spaces in the case of general unbounded domains. In
  particular, their result for $k=1$ has the same scaling as ours in the case $q=9/5$.
  Very recently, our approach of working with levels of regularity for the gradient of the
  velocity has inspired the paper of {Beir{\~a}o da Veiga} and Yang~\cite{BY2019} where
  they treat Newtonian and non-Newtonian fluids.
\end{remark}
\begin{remark}
  Even if not explicitly stated, it is clear that any of the conditions on the gradient in
  Theorem~\ref{thm:grad-ranges} implies, directly by the standard theory of traces, an
  extra condition on the initial datum. To keep the paper self-contained and more
  understandable we do not elaborate on this technical point, which can be probably
  improved by using the theory of weighted estimates as in Farwig, Giga, and
  Hsu~\cite{FGH2016} or by dealing with conditions valid on any time interval of the type
  $[\varepsilon,T]$ (with $\varepsilon>0$), as done in the recent paper by
  Maremonti~\cite{Mar2018}.
\end{remark}

\bigskip

In the last part of the paper (cf.~Section~\ref{sec:very-weak}) we will consider the
problem of energy conservation for solutions which are very-weak, see
Section~\ref{sec:classes-solutions} for the precise definition. In this case there is not
any available regularity on $u$, apart the solution being in
$L^2_{loc}((0,T)\times\Omega)$. The interest for very-weak solutions dates back to
Foias~\cite{Foi1961}, who proved their uniqueness under condition~\eqref{eq:Serrin}. Later
Fabes, Jones, and Rivi{\`e}re~\cite{FJR1972} proved the existence of very-weak solutions
for the Cauchy problem, while the case of the initial-boundary value problem has been
studied mainly starting from the work of Amann~\cite{Ama2000}. As usual when dealing with
very-weak solutions a duality argument can be employed to show uniqueness, by using
properties of the adjoint problem (which in case of the Navier-Stokes equations, is a
backward Oseen type-problem). The connection with the energy equality has been very
recently developed by Galdi~\cite{Gal2018}, who showed that very-weak solution in the
Lions-Prodi class $L^4(0,T;L^4(\Omega))$, and with initial data in $L^2(\Omega)$,
satisfy~\eqref{eq:energy-equality}. It is relevant to observe that the duality argument is
used to improve the known regularity of the solution, in order to use the previously
established results for usual weak solutions. This has been used also in a different
context in~\cite{BG2004}, but the approach we follow here takes also inspiration from a
bootstrap argument as used in Lions and Masmoudi~\cite{LM2001} for results concerning
\textit{mild solutions}.

The second result we prove is then the following.
\begin{theorem}
  \label{thm:very-weak}
  Let $u\in L^2_{loc}(\Omega \times (0,T))$ be a distributional solution
  of~\eqref{eq:NS}. If the initial datum $u_0\in H$ and if~\eqref{eq:Shinbrot} holds true, 
then the energy equality~\eqref{eq:energy-equality} is satisfied.
\end{theorem}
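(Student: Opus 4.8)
The plan is to reduce the very-weak setting to the already-understood Leray--Hopf one by a duality argument, and then to invoke the classical Shinbrot energy-equality result for genuine weak solutions (equivalently, the mechanism behind Theorem~\ref{thm:grad-ranges}). The bridge between the two settings is the backward linearized (Oseen-type) problem built on the drift $u$ itself. Concretely, for a forcing $g$ ranging over a dense class of smooth, divergence-free fields, I would consider
\[ -\partial_t\phi-\Delta\phi-(u\cdot\nabla)\phi+\nabla\pi=g,\quad \nabla\cdot\phi=0,\quad \phi|_{\partial\Omega}=0,\quad \phi(\cdot,T)=0. \]
The point of choosing exactly this adjoint is that, after inserting $\phi$ as a test function in the very-weak formulation of~\eqref{eq:NS}, the pressure pairing vanishes because $\nabla\cdot u=0$, and the convective contribution $-\int_0^T\int_\Omega (u\otimes u):\nabla\phi\,\dx\,\dt=-\int_0^T\int_\Omega u\cdot(u\cdot\nabla)\phi\,\dx\,\dt$ is precisely cancelled by the drift term of the Oseen operator.

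Assuming the dual solution $\phi$ is admissible, the very-weak formulation then collapses to the duality identity
\[ \int_0^T\int_\Omega u\cdot g\,\dx\,\dt=\int_\Omega u_0\cdot\phi(\cdot,0)\,\dx. \]
The backward Oseen problem admits the natural energy estimate: multiplying by $\phi$ and integrating, the drift contribution $\int_\Omega (u\cdot\nabla)\phi\cdot\phi\,\dx=\tfrac12\int_\Omega u\cdot\nabla|\phi|^2\,\dx$ vanishes thanks to $\nabla\cdot u=0$ and $\phi|_{\partial\Omega}=0$, leaving control of $\sup_{t}\|\phi(t)\|_{L^2}$ and of $\|\nabla\phi\|_{L^2(0,T;L^2)}$ by the data norm of $g$. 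Feeding these bounds into the duality identity and letting $g$ range over a dense set, one reads off by duality and iteration that $u$ belongs to the energy space $L^\infty(0,T;L^2(\Omega))\cap L^2(0,T;H^1_0(\Omega))$ and attains the datum $u_0$; here a bootstrap in the spirit of Lions and Masmoudi~\cite{LM2001} upgrades the a priori meagre information $u\in L^2_{loc}$ to full membership in the Leray--Hopf class.

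With $u$ now known to be a genuine Leray--Hopf weak solution that in addition satisfies~\eqref{eq:Shinbrot}, the identity~\eqref{eq:fundamental} can be justified by the standard time-mollification argument, and~\eqref{eq:energy-equality} follows exactly as in the weak case, for instance by the classical Shinbrot criterion or, equivalently, via Theorem~\ref{thm:grad-ranges} once the gradient integrability is recorded.

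The main obstacle is the admissibility of the dual solution $\phi$, i.e. showing that it is regular enough to serve as a test function and, above all, that the convective pairing is finite at the borderline Shinbrot scaling. The product $u\otimes u$ lies in $L^{r/2}(0,T;L^{s/2}(\Omega))$, and the equality $\frac{2}{r}+\frac{2}{s}=1$ says exactly that $\frac{1}{r/2}+\frac{1}{s/2}=1$, so that $u\otimes u$ must be paired with $\nabla\phi\in L^{s/2}(0,T;L^{r/2}(\Omega))$. Producing a dual solution with precisely this gradient integrability is the heart of the matter: since Shinbrot's condition is weaker, and not scaling invariant, than the Serrin condition that would grant full parabolic maximal regularity for the Oseen operator, one cannot simply quote a subcritical well-posedness theory, and must instead exploit the exact H\"older-conjugate structure of $u\otimes u$ together with the regularizing properties of the Stokes semigroup.
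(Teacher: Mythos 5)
Your high-level strategy coincides with the paper's: test the very-weak formulation against a backward Oseen dual problem, upgrade $u$ to the Leray--Hopf class, then invoke the classical Shinbrot result. You also correctly compute the target regularity of the dual solution, $\nabla\phi\in L^{s/2}(0,T;L^{s/(s-2)}(\Omega))$ (your $L^{s/2}(L^{r/2})$, since $r/2=s/(s-2)$ under~\eqref{eq:Shinbrot}). But the proposal stops exactly where the real proof begins: you state that one must ``exploit the exact H\"older-conjugate structure of $u\otimes u$ together with the regularizing properties of the Stokes semigroup,'' yet give no mechanism that actually produces this gradient integrability. This is not a routine verification; it is the bulk of the paper's argument. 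The paper reaches~\eqref{eq:final_estimate} by an iterated maximal-regularity bootstrap: starting from the energy bound $\nabla\Psi^\varepsilon\in L^2(0,T;L^2(\Omega))$, each application of Solonnikov's estimate (Theorem~\ref{thm:maximal-regularity}) combined with Amann's sharp embedding (Lemma~\ref{lem:embedding}, $1/p_*=1/p-1/2$) trades space integrability for time integrability, and the key structural point is the invariance~\eqref{eq:iteration}: the sum $1/\alpha_n+1/\beta_n$ does not degrade precisely because $\gamma=1/r+1/s\leq 1/2$, which is the real explanation of why the non scale-invariant Shinbrot condition is the right one. One needs finitely many iterations, roughly $N=\left[\frac{s}{2}\right]-1$, plus an interpolation between consecutive iterates when $s$ is not an even integer, to land exactly on $L^{\frac{2s}{2+s}}(0,T;L^{\frac{s}{s-1}}(\Omega))$. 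None of this appears, even in sketch form, in your proposal, so the admissibility of $\phi$ --- which you yourself identify as ``the heart of the matter'' --- is left unproved.

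There is a second, compounding gap: you pose the adjoint problem with the rough drift $u$ itself. For a field that is merely $L^2_{loc}$ and satisfies~\eqref{eq:Shinbrot} (which is below Serrin scaling), the solvability of that linear problem in the energy space, the justification of the formal cancellation $\int_\Omega(u\cdot\nabla)\phi\cdot\phi\,\dx=0$ (which requires a meaning for $\nabla\cdot u=0$ and for the normal trace of $u$), and the vanishing of the pressure pairing $\int_0^T(u,\nabla\pi)\,\dt$ all presuppose regularity of the type you are trying to establish --- the argument as written is circular. The paper avoids this by mollifying the drift: it constructs divergence-free smooth approximations $u_{(\varepsilon)}$ (cut-off, Friederichs mollification, and a Bogovski\u{\i}-type correction), solves the forward and backward Oseen problems~\eqref{eq:adjoint1} and~\eqref{eq:adjoint2} with drift $u_{(\varepsilon)}$, writes the identity~\eqref{eq:difference} for the \emph{difference} $u-u^\varepsilon$, and then identifies $u$ with the weak limit of the family $u^\varepsilon$, which is bounded in $L^\infty(0,T;H)\cap L^2(0,T;V)$ uniformly in $\varepsilon$; all bootstrap estimates are likewise uniform in $\varepsilon$. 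Without this (or an equivalent) regularization and comparison step, your duality identity, your energy estimate for $\phi$, and the concluding claim that ``duality and iteration'' place $u$ in the energy space remain formal statements rather than a proof.
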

The interpretation of the latter result and its connection with the scaling of space and
time variables will be given in Section~\ref{sec:comparison-very-weak}.  The techniques
employed here are inspired by the approach followed in Galdi~\cite{Gal2018,Gal2018b}.

Summarizing the results, the sufficient conditions required for the energy equality are
stronger if the requested regularity of the solution $u$ is weaker. In particular, the
less strict requirement of the solution (being just a distributional solution) seems to
require more restrictive conditions then those in Theorem~\ref{thm:grad-ranges}, which are
nevertheless the same as those classically found in~\cite{Pro1959,Lio1960,Shi1974} for
Leray-Hopf weak solutions.

\bigskip

\noindent\textbf{Acknowledgments} 
The authors warmly thank Prof. Herbert Amann for having provided in a private
communication, and prior to publishing, the results of Lemma~\ref{lem:embedding}, which
will appear soon in Ref.~\cite{Ama2018}.  

The research that led to the present paper was partially supported by a grant of the group
GNAMPA of INdAM and by the project of the University of Pisa within the grant
PRA$\_{}2018\_{}52$~UNIPI \textit{Energy and regularity: New techniques for classical PDE
  problems.}
\section{Functional setting and comparison with previous results}
\label{sec:preliminaries}
In this section we first introduce the notation and the precise definitions of solutions
we want to deal with, then we compare our results with the ones in the existing
literature.  We will use the customary Sobolev spaces
$(W^{s,p}(\Omega),\,\|\,.\,\|_{W^{s,p}})$ and we denote the $L^p$-norm simply by
$\|\,.\,\|_p$ and since the Hilbert case plays a special role we denote the
$L^2(\Omega)$-norm simply by $\|\,.\,\|$. For $X$ a Banach space we will also denote the
usual Bochner spaces of functions defined on $[0,T]$ and with values in $X$ by
$(L^p(0,T;X),\|\,.\,\|_{L^p(X)})$. In the case $X=L^q(\Omega)$ we denote the norm of
$L^{p}(0,T;L^{q}(\Omega))$ simply by $\|\,.\,\|_{p,q}$.
\subsection{On Leray-Hopf weak and very-weak solutions}
\label{sec:classes-solutions}
For the variational formulation of the Navier-Stokes equations~\eqref{eq:NS}, we introduce
as usual the space $\mathcal{V}$ of smooth and divergence-free vectors fields, with compact
support in $\Omega$.  We then denote the completion of $\mathcal{V}$ in $L^2(\Omega)$ by
$H$ and the completion in $H^1_0(\Omega)$ by $V$. The Hilbert space $H$ is endowed with
the natural $L^2$-norm $\norm{\,.\,}$ and inner product $(\cdot,\cdot)$, while $V$ with
the norm $\norm{\nabla v}$ and inner product $((u,v)):=(\nabla u, \nabla v)$. As usual, we
do not distinguish between scalar and vector valued functions. The dual pairing between
$V$ and $V'$ is denoted by $\langle\cdot, \cdot\rangle$, and the dual norm by
$\norm{\,.\,}_\ast$.

\begin{definition}[Leray-Hopf weak solutions]
\label{def:weak}
  A vector field $u\in L^\infty(0,T; H)\cap L^2(0,T; V)$ is a Leray-Hopf weak solution to
  the Navier-Stokes equations~\eqref{eq:NS} if
\begin{itemize}
	\item [(i)]  $u$ is a solution of ~\eqref{eq:NS} in the sense of distributions, i.e.
  \begin{equation*}
    \int_0^T(u,\partial_t \phi)-(\nabla u,\nabla \phi)-((u\cdot\nabla)\,
    u,\phi)\,\dt=-(u_0,\phi(0)), 
  \end{equation*}
  for all $\phi\in C^\infty_0([0,T[\times \Omega)$ with $\nabla\cdot \phi=0$;
\item [(ii)] $u$ satisfies the global energy inequality
  \begin{equation}
    \label{eq:energy-inequality}
    \frac{1}{2}\|u(t)\|^2+\int_0^t\norm{\nabla u(s)}^2\,\ds\leq\frac{1}{2}\|u_0\|^2\qquad
    \forall\,t\geq0;
  \end{equation}
\item [(iii)]  the initial datum is attained in the strong sense of $L^2(\Omega)$
  \begin{equation*}
    \|u(t)-u_0\|\to0\qquad t\to0^+.
  \end{equation*}
\end{itemize}
\end{definition}
Since the works of Leray and Hopf~\cite{Ler1934,Hop1951} for the Cauchy problem and for
the initial boundary vale problem, it is well-known that for initial data in $H$, and for
all $T>0$ there exists at least a weak solution in the above sense. An outstanding open
problem is that of proving (or disproving) the uniqueness and regularity of weak
solutions, see Constantin and Foias~\cite{CF1988} and Galdi~\cite{Gal2000a}.

\medskip 

In the last section of the paper we deal with another less-restrictive notion of solution,
which allows for infinite energy.
\begin{definition}[very-weak solutions]
\label{def:very-weak}
 A vector field $u\in L^2_{loc}((0,T)\times \Omega)$ is a very-weak solution to the
 Navier-Stokes equations  
 if 
\begin{itemize}
\item [(i)]  the following identity 
 \begin{equation*}
   \int_0^T    (u,\partial_t \phi)+(u,\Delta\phi)+(u,(u\cdot\nabla)\, \phi)\,\dt=-(u_0,\phi(0)),
 \end{equation*}
 holds true for all $\phi\in \mathcal{D}_T$,where
 \begin{equation*}
   \mathcal{D}_T:=\left\{
     \begin{aligned}
       &\phi\in C^\infty([0,T]\times\overline{\Omega}), \text{with support contained in a
         compact set of }[0,T]\times\overline{\Omega}, 
       \\
       &\text{such that } \nabla\cdot \phi=0\text{ in }\Omega ,\ \phi=0\text{ on
       }\partial\Omega\text{ and }\phi(T) = 0 \text{ in }\Omega
     \end{aligned}
\right\};
 \end{equation*}
\item[(ii)]$\nabla\cdot u=0$ in the sense of $\mathcal{D}'(\Omega)$, that in the sense of
  distributions, for a.e. $t\in [0,T]$.
\end{itemize}
\end{definition}
As mentioned before, the existence of very-weak solutions in the case of the whole space
has been treated in~\cite{FJR1972}, but the notion in the 3D time-evolution case in a
bounded domain (especially with analysis of the meaning of boundary conditions) is
analyzed for instance in Amann~\cite{Ama2000} and Farwig, Kozono, and
Sohr~\cite{FKS2007a}, with emphasis on the non-homogeneous problem, too.
%
%
\subsection{Comparison with previous results}
In this section we compare our results with the ones already present in literature. From
the physical point of view the most relevant results are those related with large $q$,
especially with $q>3$, being $3$ the space dimension.
\subsubsection{Energy conservation and Onsager conjecture}
\label{sec:comparison-weak}
As mentioned in the introduction, the validity of~\eqref{eq:energy-equality} has a strong
connection with Onsager conjecture about the threshold regularity of $C^{1/3}$-H\"older
continuity of the velocity which allows for energy conservation.  While for the 3D Euler
equations the conjecture is basically solved in both directions, for the 3D Navier-Stokes
equations much has still to be done.
This is a strong motivation to investigate further the energy equality for the 3D
Navier-Stokes equations and to understand the anomalous energy dissipation phenomenon in
this case.  Our result involves Sobolev regularity of the velocity, but this can be
compared to H\"older regularity as we will explain in the following. In the range $q>3$,
standard Sobolev embedding theorems imply $W^{1,q}(\Omega)\subset
C^{0,1-3/q}(\overline{\Omega})$, and $W^{1,q}(\Omega)$ is a \textit{proper} subset of
$C^{0,1-3/q}(\overline{\Omega})$. On the other hand, in order to compare our result with
previous ones, we can consider the two spaces as if they would be equivalent in a certain
sense.  Indeed, Sobolev norms measure a sort of combination of three properties of a
function: height (amplitude), width (measure of the support), and frequency (inverse
wavelength). Roughly speaking, if a function has amplitude $A$, is supported on a set of
volume $V$, and has oscillations with frequency $N$, then the $W^{k,p}$-(homogeneous) norm
is of the order of $A N^k V^{1/p}$. A key quantity is the ``weight'' $k-\frac{n}{p}$ of
the Sobolev space $W^{k,p}(\R^n)$: If one scales the metric by a constant $R>0$ then this
transformations scales the $W^{k,p}$-norm by $R^{n/p-k}$ and the $C^{m,\alpha}$-norm by
$R^{-(m+\alpha)}$, showing the relevance of the weights.  Hence, from the point of view of
Sobolev embedding and fractional regularity it is sound to consider as very close (and so
almost equivalent) in the 3D case the spaces $W^{s_1,p_1}$ and $W^{s_2,p_2}$, when
$1/p_1-s_1/3=1/p_2-s_2/3$, and they both embed in $C^{0,\alpha}$, for
$\alpha:=s_{i}-3/p_{i}$, whenever this quantity is positive.  Also in terms of
interpolation theory, these spaces behave in the same way, as can be seen with the DeVore
diagrams~\cite{Dev1998}. Hence, we can also consider $C^{0,1-3/q}(\overline{\Omega})$ as a
rough --but meaningful-- measure of the classical regularity of $W^{1,q}(\Omega)$.
 
By embedding, results from Theorem~\ref{thm:grad-ranges} in the range $q>3$ are very close
to the condition
\begin{equation*}
u\in L^{1+\frac{2}{q}}(0,T;C^{0,1-\frac{3}{q}}(\overline{\Omega})).
\end{equation*}
In particular, by taking $q=9/2$ we obtain, as class of solutions conserving the energy,
that with scaling comparable to
\begin{equation*}
  u\in L^{\frac{13}{9}}(0,T;C^{0,1/3}(\overline{\Omega})),
\end{equation*}
which improves the previously cited results
\begin{equation*}
  \begin{aligned}
    &  u\in L^3(0,T;B^{1/3}_{3,\infty}(\R^{3})) \qquad \,\text{  from Ref.~\cite{CCFS2008}};
    \\
    &  u\in L^\frac{3}{2}_w(0,T;B^{1/3}_{\infty,\infty}(\R^{3}))\qquad \text{from
    Ref.~\cite{CL2018},  when $p=\infty$, and $\beta=3/2$}. 
  \end{aligned}
\end{equation*}
In the space variables the two functional spaces are very close to
$C^{0,1/3}(\overline{\Omega})$, but $3>3/2=1.5>13/9=1.\overline{4}$.

We also warn again the reader that our results are obtained by embedding, hence they are
valid for a proper subset of $C^{0,1/3}(\overline{\Omega})$ and there is not any direct
connection between H\"older and Sobolev regularity (recall that the Weierstrass function
$\sum_{n\in\N} a^n \cos(b^n \pi x)$, $0<a<1$, is $-\log(a)/\log(b)-$H\"older continuous
but is not even of bounded variation). Nevertheless, in terms of scaling our results
present a better behavior as compared with the previous ones present in literature. This
suggests that there could be also room for further improvements.
\subsubsection{The case $q<3$}
\label{sec:comparison-very-weak00}
In order to explain the comparison (we will make later on) we also wish to recall the notion
of scaling invariance for space-time functions. By interpolation one can show that
Leray-Hopf weak solution have the following regularity
\begin{equation*}
  u\in L^r(0,T;L^s(\Omega))\qquad\text{with}\quad \frac{2}{r}+\frac{3}{s}=\frac{3}{2},\qquad
  \text{for}\quad  2\leq s\leq6. 
\end{equation*}
Several results (starting again from the classical work of Lady\v{z}henskaya, Prodi, and
Serrin) concern uniqueness and regularity with scaling invariant conditions on solutions.
In particular, if a weak solution satisfies condition~\eqref{eq:Serrin}
(see~\cite{Ser1963} for example), then it becomes unique, strong (namely the gradient
belongs to $L^{\infty}(0,T;L^{2}(\Omega))$), smooth, and it satisfies the energy equality.

Full regularity of weak solutions follows also under alternative assumptions on the
gradient of the velocity $\nabla u$. More specifically, if
\begin{equation}
  \label{eq:gradient-scaling}
  \nabla  u\in L^p(0,T;L^q(\Omega))\qquad\text{with}\quad
  \frac{2}{p}+\frac{3}{q}=2,\qquad\text{for}\quad  q>\frac{3}{2},  
\end{equation}
then weak solutions are regular, see Beir\~ao da Veiga~\cite{Bei1995a} and
also~\cite{Ber2002a} for the problem in a bounded domain.  In $\R^{3}$ standard Sobolev
embeddings imply that if $\nabla u \in L^p(0,T; L^q(\Omega))$ then $u\in L^p(0,T;
L^{q^*}(\Omega))$ where $\frac{1}{q^\ast}=\frac{1}{q}-\frac{1}{3}$.
We recall that for weak solutions $\nabla u$ is simply $(x,t)$-square-integrable and
$\frac{2}{2}+\frac{3}{2}=\frac{5}{2}>2$.

The class defined by~\eqref{eq:Serrin} is important from the
point of view of the relation between scaling invariance and partial regularity of weak
solutions. In fact, if a pair $(u,p)$ solves~\eqref{eq:NS}, then so does the family
$\{(u_\lambda,p_\lambda)\}_{\lambda>0}$ defined by
\begin{equation}
  \label{eq:parabolic-scaling}
  u_\lambda(t,x):=\lambda u(\lambda^2 t,\lambda x)\qquad \text{and}
  \qquad p_\lambda(x,t):=\lambda^2 p(\lambda^2 t,\lambda  x). 
\end{equation}
Scaling invariance means that
$\|u_\lambda\|_{L^r(0,T/\lambda^2;L^s(\Omega_\lambda))}=\|u\|_{L^r(0,T;L^s(\Omega))}$ and
this happens if and only if $(r,s)$ satisfy~\eqref{eq:Serrin}. (Here
$\Omega_\lambda:=\{x/\lambda:\ x\in \Omega\}$.) Likewise, the scaling invariance of
$\nabla u$ occurs if and only if~\eqref{eq:gradient-scaling} holds true. We will use these
notions to compare classical and more recent results with the new ones from
Theorem~\ref{thm:grad-ranges}.

If $u\in L^{4}(0,T;L^{4}(\Omega))$ then, \textit{in terms of scaling}, this regularity lies in between
the class of existence and that of regularity since
\begin{equation*}
1=\frac{2}{4}+\frac{2}{4}<  \frac{2}{4}+\frac{3}{4}=\frac{5}{4}<\frac{3}{2}.
\end{equation*}
We note that also the class~\eqref{eq:Shinbrot} identified by Shinbrot shares the same
property, even if in terms of scaling
\begin{equation*}
1<\frac{2}{r}+\frac{3}{s}=\frac{2}{r}+\frac{2}{s}+\frac{1}{s}=1+\frac{1}{s}<\frac{3}{2},
\end{equation*}
hence, as $s$ increases, this class becomes closer and closer to that of regularity given
by~\eqref{eq:Serrin}.  A natural question is that if we can lower down the level of
regularity needed for a weak solution, in order to satisfy the energy equality.

The new results we present are related with the aim of finding sufficient conditions for
the energy equality involving the gradient of the velocity. 

The fact that condition~\eqref{eq:Shinbrot} is not scaling invariant makes it possible to
conjecture that perhaps some threshold can be broken by a more precise inspection of the
calculations. In fact, the aforementioned results in~\cite{CFS2010,Far2014} identified the
sufficient conditions
\begin{equation*}
  A^{5/12}u\in L^{3}(0,T;L^{2}(\Omega))\qquad\text{or}\qquad   A^{1/4}u\in
  L^{3}(0,T;L^{18/7}(\Omega)). 
\end{equation*}
which are --in terms of scaling-- both comparable with
\begin{equation*}
  u\in  L^{3}(0,T;L^{9/2}(\Omega)),
\end{equation*}
which is less restrictive than~\eqref{eq:Shinbrot}, since
\begin{equation*}
  1< \frac{2}{3}+\frac{2}{9/2}=\frac{10}{9},
\end{equation*}
but at the same time the space $L^{3}(0,T;L^{9/2}(\Omega))$ it is still between the class
of existence and that of regularity, being
$1<\frac{2}{3}+\frac{3}{9/2}=\frac{4}{3}<\frac{3}{2}$.
%

 The ranges obtained in Theorem~\ref{thm:grad-ranges} have respectively the following
  properties, which turn out easy to be compared with the
  condition~\eqref{eq:gradient-scaling}.

We have in fact that: 
  \begin{itemize}
  \item[(i)] it holds $2<\frac{2}{p}+\frac{3}{q}=4-\frac{3}{q}<\frac{5}{2}$, \qquad for \quad
    $\frac{3}{2}< q<\frac{9}{5}$;
  \item[(ii)] it holds $2<\frac{2}{p}+\frac{3}{q}=2+\frac{3}{5q}<\frac{5}{2}$, \qquad for \quad
    $\frac{9}{5}\leq q< 3$;
  \item[(iii)] it holds $2<\frac{2}{p}+\frac{3}{q}=\frac{2q}{q+2}+\frac{3}{q}<\frac{5}{2}$,
    \qquad for \quad $3\leq q<6$.
  \end{itemize}
  Thus, we have that for $\frac{3}{2}<q<6$ our conditions imply range of exponents which are
  not those of scaling invariance~\eqref{eq:gradient-scaling}, hence not those implying full
  regularity of the solutions (and consequently also energy equality in a trivial way).
%
\begin{remark}
  In cases (i) and (ii), i.e. when $\frac{3}{2}<q<3$, the standard Sobolev embedding tells
  us that $u\in L^p(0,T; L^{q^\ast}(\Omega))$ where $q^\ast=\frac{3q}{3-q}$ and $p$ is
  given as a function of $q$ by Theorem~\ref{thm:grad-ranges}. This means that by case (i)
  and case (ii) the exponent of integrability $q^*$ in the space variable for the solution $u$
  can range from $3$ to $+\infty$.
\end{remark}
%
  Recalling that $q^\ast=\frac{3q}{3-q}$, the ranges obtained in
  Theorem~\ref{thm:grad-ranges} have respectively the following properties:
\begin{itemize}
\item[(i)] it holds $1<\frac{2}{p}+\frac{2}{q^\ast}=\frac{2(5q-6)}{3q}$, \qquad for\quad 
  $\frac{12}{7}<q<\frac{9}{5}$;
\item[(ii)] it holds $1<\frac{2}{p}+\frac{2}{q^\ast}=\frac{2(10q-3)}{15q}$, \qquad for\quad
  $\frac{9}{5}\leq q<3$;
\item[(iii)] it holds $1<\frac{2}{p}+\frac{2}{q^\ast}=\frac{2(2q-3)}{3q}$, \qquad
  for \quad $q\geq3$.
\end{itemize}
Thus, we showed that our range of exponents improves those in Shinbrot
condition~\eqref{eq:Shinbrot}.
We observe that Shinbrot condition for the space integrability
of $u$ (exponent $\geq4$) corresponds to $q>\frac{12}{7}$ in our classification.
%
\begin{remark}
  The ``best exponent,'' where best is measured in terms of giving the quantity
  $\frac{2}{p}+\frac{2}{q}$ as large as possible, turns out to be $q=9/5$. This implies that
  by embedding $(9/5)^*=9/2$ which gives as sufficient condition
\begin{equation*}
  u\in L^3(0,T;W^{1,9/5}(\Omega))\subset L^3(0,T;L^{9/2}(\Omega)),
\end{equation*}
that is at the same level of scaling of~\cite{CFS2010,Far2014}, even if the
various conditions are not directly comparable each other.
\end{remark}
%
We further remark that, in the range $q>\frac{12}{7}$, our result improves also the ranges
obtained by Leslie and Shvydkoy in~\cite{LS2018}.  Indeed, they prove
(see~\cite[Theorem~1.1]{LS2018}) the validity of the energy equality for $u\in
L^r(0,T;L^s(\Omega))$ with $3\leq r \leq s$ and
\begin{equation*}
  \frac{1}{r}+\frac{1}{s}\leq \frac{1}{2},
\end{equation*}
while, for $q>\frac{12}{7}$ we are assuming $\nabla u\in L^p(0,T; L^q(\Omega))$ and hence
$u\in L^p(0,T; L^{q^\ast}(\Omega))$ with $p\leq q^\ast$ and
\begin{equation*}
  \frac{1}{p}+\frac{1}{q^\ast}>\frac{1}{2}.
\end{equation*}
However, authors in~\cite{LS2018} studied also the case $s<3$ corresponding in our case to
$q<3/2$ which is not covered here.  Moreover, for the case $3\leq s< r$, they can prove
the energy equality with exponents $r$ and $s$ satisfying
$\frac{1}{r}+\frac{1}{s}<\frac{1}{2}$: this matches our interval $3/2\leq q <12/7$, where
$\frac{1}{p}+\frac{1}{q^\ast}< \frac{1}{2},$ thus showing that we are not
improving~\cite{LS2018} in this last range of exponents.

\begin{figure}[htbp]
\begin{center}
\scalebox{0.5}{\input 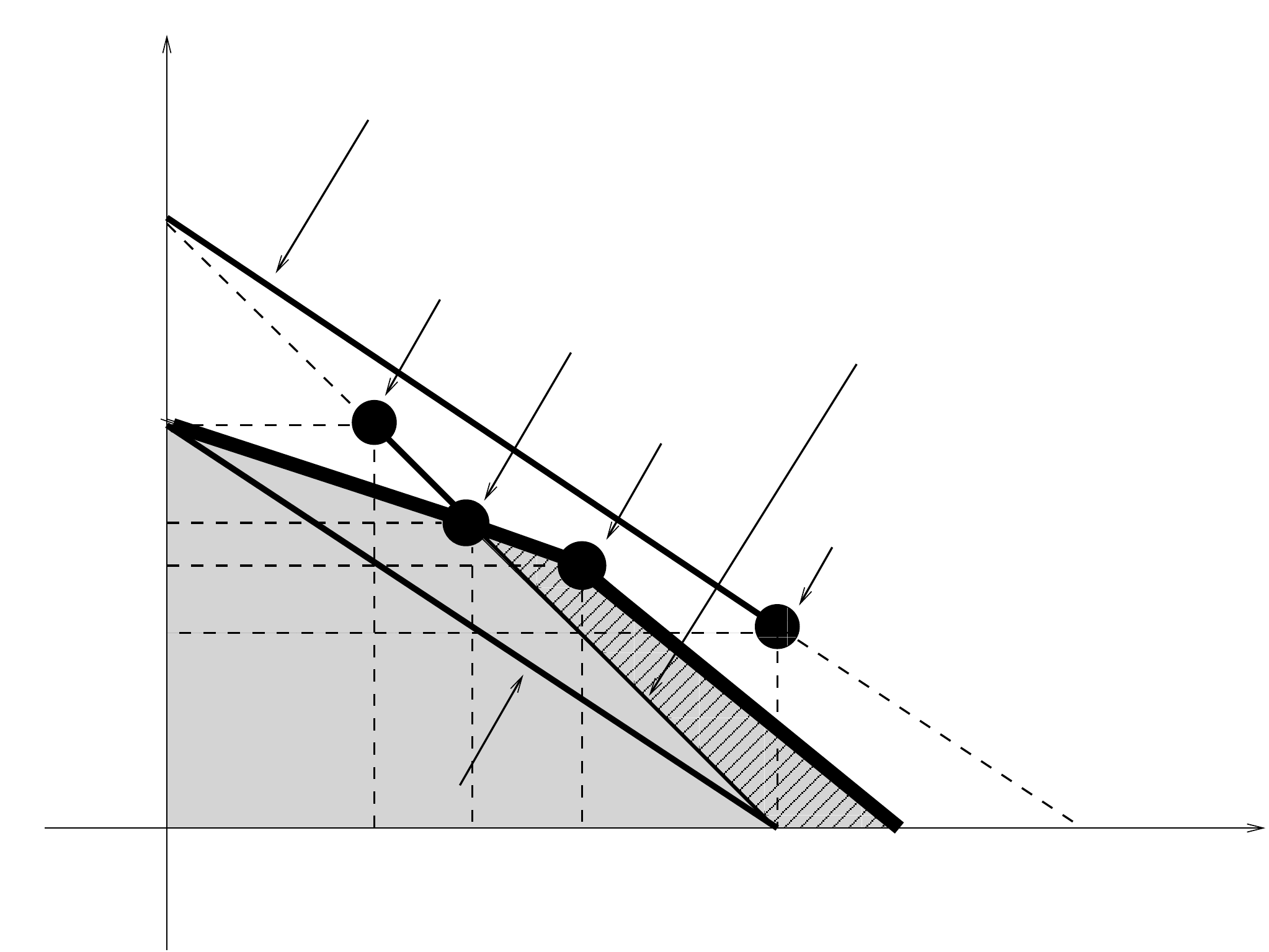_t }
\caption{The shaded area corresponds to the ranges of Theorem~\ref{thm:grad-ranges} and in
  particular the dashed one corresponds to new ranges of exponents not obtained before in
  the literature.}
\label{fig:comparison}
\end{center}
\end{figure}

%
\subsection{On the Shinbrot condition~\eqref{eq:Shinbrot} and very-weak
  solutions} \label{sec:comparison-very-weak}
In the case of very-weak solutions, the sufficient condition for energy equality we find
is the same as~\eqref{eq:Shinbrot}, hence showing that it has a universal role, since it
applies to distributional solutions which can be outside of any Lebesgue space, hence not
the common Leray-Hopf weak solutions.

We observe that the condition~\eqref{eq:Shinbrot} breaks all the standard theory of
scaling invariance, even if we consider different families of scaling transformation. In
fact, beside the standard parabolic scaling~\eqref{eq:parabolic-scaling}, it is well-known
that useful different approaches are those concerning invariance of the equation under the
following space-time transformation, indexed by $\alpha\in \R$
\begin{equation}
  \label{eq:general-scaling}
  u_{\lambda,\alpha}(t,x):=\lambda^{\alpha} u(\lambda^{\alpha+1} t,\lambda
  x)\qquad\text{and}\qquad 
  p_{\lambda,\alpha}(t,x):=\lambda^{2\alpha} p(\lambda^{\alpha+1} t,\lambda x). 
\end{equation}
Note that the transformation in~\eqref{eq:parabolic-scaling} corresponds to $\alpha=1$. 
On varying $\alpha$, with these transformations it is possible to extract some heuristic
and useful information from the equations. In particular, they keep for all $\alpha\in \R$ the
material derivative operator $\frac{D}{Dt}:=\partial_t+u\cdot\nabla$ unchanged, while the
viscosity changes from $\nu$ to $\lambda^{\alpha-1}\nu$. This explains the relevance of
these transformation in the study of Euler equations or also in presence of very small
viscosities. We point out that the case $\alpha=-1/3$ determine the so-called
\textit{Kolmogorov scaling}, which has connections with the conservation of energy input
for stochastic statistically stationary solutions, stochastic equations, and Large Eddy
Simulation for turbulence modeling. See~\cite{BF2010,BIL2006}, Flandoli, Gubinelli,
Hairer, and Romito~\cite{FGHR2008}, and Kupiainen~\cite{Kup2003}.  If we try to consider
the scaling of norms under the transformation~\eqref{eq:general-scaling} we obtain that
\begin{equation*}
  \|u_{\lambda,\alpha}\|_{L^{r}(0,T/\lambda^2;L^{s}(\Omega_{\lambda}))}
  =\lambda^{\alpha-\frac{3}{s}-\frac{\alpha+1}{r}}\|u\|_{L^{r}(0,T;L^{s}(\Omega))}
\end{equation*}
hence the norms cannot be invariant for the relevant values of $\alpha$, since
$\alpha-\frac{3}{s}-\frac{\alpha+1}{r}$ never vanishes for both $\alpha=1$ and
$\alpha=-1/3$. A possible explanation or justification of the
condition~\eqref{eq:Shinbrot} will come from a different analysis.

In particular, in condition~\eqref{eq:Shinbrot} the space and time variables have the same
``strength,'' as in hyperbolic equations, while in our studies the viscosity and hence the
parabolic part play a fundamental role. To understand in which situation the hyperbolic
nature of the convection plays a major role we have to consider the maximal regularity
type results for the Stokes problem, which will be used to handle the adjoint
problem. Coming back to Solonnikov~\cite{Sol1976} (see also Giga and Sohr~\cite{GS1991})
it is well-known that for a wide class of domains the following result holds true.
\begin{theorem}
\label{thm:maximal-regularity}
Let $\Omega\subset\R^{3}$ be smooth and bounded (or exterior or the full-space or even
the half-space) and let $\mathcal{F}\in L^{\alpha}(0,T;L^{\beta}(\Omega))$, with
$1<\alpha,\, \beta<\infty$. Then, the boundary initial value problem associated to the
linear Stokes system
  \begin{equation}
    \label{eq:linear-Stokes}
    \begin{aligned}
      \partial_{t}v-\Delta v+\nabla \pi&=\mathcal{F}\qquad \text{in }(0,T)\times\Omega, 
      \\
      \nabla \cdot v&=0 \qquad \text{in }(0,T)\times\Omega,
      \\
      v(t,x)&=0 \qquad \text{on }(0,T)\times\partial\Omega,
      \\
      v(0,x)&=0 \qquad \text{in }\Omega,
    \end{aligned}
  \end{equation}
has a unique solution $(v,\pi)$ such that
\begin{equation}
  \label{eq:max-regularity}
  \exists\, c>0\qquad \|\partial_{t}v\|_{L^{\alpha}(L^{\beta})}+\|P\Delta
  v\|_{L^{\alpha}(L^{\beta})}+\|\nabla \pi\|_{L^{\alpha}(L^{\beta})}\leq
  c\,\|\mathcal{F}\|_{L^{\alpha}(L^{\beta})}, 
\end{equation}
where the constant $c$ depends on $T,\,\alpha,\,\beta$, and on the domain, while $P$ is
the Leray projection operator on divergence-free and tangential to the boundary vector fields.
\end{theorem}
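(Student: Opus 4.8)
The plan is to recast the Stokes system~\eqref{eq:linear-Stokes} as an abstract parabolic Cauchy problem on the solenoidal subspace of $L^{\beta}(\Omega)$ and to invoke the theory of maximal $L^{\alpha}$-regularity. First I would recall that for $1<\beta<\infty$ the Helmholtz--Leray decomposition $L^{\beta}(\Omega;\R^{3})=L^{\beta}_{\sigma}(\Omega)\oplus\{\nabla\phi\}$ holds, so that the projection $P$ onto divergence-free, boundary-tangential fields is bounded on $L^{\beta}(\Omega)$; this is a classical consequence of the unique solvability of the weak Neumann problem in $L^{\beta}$. Applying $P$ to the momentum equation annihilates the pressure gradient and transforms~\eqref{eq:linear-Stokes} into
\[
  \partial_{t}v+A_{\beta}v=P\mathcal{F},\qquad v(0)=0,
\]
where $A_{\beta}:=-P\Delta$, with domain $D(A_{\beta})=W^{2,\beta}(\Omega)\cap W^{1,\beta}_{0}(\Omega)\cap L^{\beta}_{\sigma}(\Omega)$, is the Stokes operator.

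The core of the argument is to establish maximal $L^{\alpha}$-regularity for $A_{\beta}$, namely that $\mathcal{F}\mapsto\partial_{t}v$ is bounded from $L^{\alpha}(0,T;L^{\beta}_{\sigma})$ into itself. I would first show that $-A_{\beta}$ generates a bounded analytic semigroup on $L^{\beta}_{\sigma}(\Omega)$; this rests on the sectorial resolvent estimate $\|(\lambda+A_{\beta})^{-1}\|_{\mathcal{L}(L^{\beta}_{\sigma})}\le C/|\lambda|$ for $\lambda$ in a sector $\{|\arg\lambda|<\theta\}$ with some $\theta>\pi/2$, which is exactly the potential-theoretic estimate proved by Solonnikov and revisited by Giga and Sohr. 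Two routes then lead to maximal regularity. The classical one proves that $A_{\beta}$ admits bounded imaginary powers with power angle strictly below $\pi/2$ and concludes via the Dore--Venni theorem, using that $L^{\beta}_{\sigma}$ is a closed subspace of the UMD space $L^{\beta}$. Alternatively one verifies the $\mathcal{R}$-boundedness of the family $\{\lambda(\lambda+A_{\beta})^{-1}\}$ on the sector and applies the Weis characterization of maximal $L^{p}$-regularity in UMD spaces. Either way one obtains a constant $c=c(T,\alpha,\beta,\Omega)$ bounding $\|\partial_{t}v\|_{L^{\alpha}(L^{\beta})}$ and $\|A_{\beta}v\|_{L^{\alpha}(L^{\beta})}=\|P\Delta v\|_{L^{\alpha}(L^{\beta})}$ in terms of $\|\mathcal{F}\|_{L^{\alpha}(L^{\beta})}$.

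It remains to recover the pressure and estimate its gradient. From the momentum equation together with the projected problem one has $\nabla\pi=(I-P)(\mathcal{F}+\Delta v)$; since $I-P$ is bounded on $L^{\beta}$, and since stationary Stokes regularity controls the full Hessian $\nabla^{2}v$ --- and in particular $\Delta v$ --- in $L^{\alpha}(L^{\beta})$ by $\|A_{\beta}v\|_{L^{\alpha}(L^{\beta})}$, one obtains the bound on $\|\nabla\pi\|_{L^{\alpha}(L^{\beta})}$ in~\eqref{eq:max-regularity}. Uniqueness is immediate: any solution of the homogeneous problem projects to a solution of $\partial_{t}v+A_{\beta}v=0$ with $v(0)=0$, hence $v\equiv0$ by the semigroup, and then $\nabla\pi\equiv0$. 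The main obstacle, and the genuinely hard analytic input, is the sectorial resolvent estimate together with the $\mathcal{R}$-boundedness (equivalently the bounded imaginary powers): this is where the geometry of $\partial\Omega$ enters, and for general smooth bounded (as well as exterior, full-space, or half-space) domains it requires the full localization-and-perturbation machinery of Solonnikov and Giga--Sohr rather than any soft argument. Everything else is a routine assembly of bounded operators.
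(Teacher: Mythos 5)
The paper does not actually prove Theorem~\ref{thm:maximal-regularity}: it is invoked as a classical result, with the proof delegated entirely to Solonnikov~\cite{Sol1976} and Giga--Sohr~\cite{GS1991}, so there is no internal proof to compare against. Your outline is a correct reconstruction of precisely the argument in those references --- Helmholtz--Leray projection onto $L^{\beta}_{\sigma}$, the Stokes operator $A_{\beta}=-P\Delta$ as a sectorial operator with bounded imaginary powers (Giga--Sohr's route via Dore--Venni, or equivalently the $\mathcal{R}$-boundedness/Weis characterization), pressure recovery via $\nabla\pi=(I-P)(\mathcal{F}+\Delta v)$ together with stationary Stokes (Cattabriga-type) regularity to control $\Delta v$ by $\|A_{\beta}v\|_{L^{\beta}}$, and uniqueness from the semigroup --- so it matches the approach the paper relies on by citation, with the only point deserving extra care being that the homogeneous Hessian estimate and the boundedness of $P$ must be checked separately for the exterior and half-space cases, which is exactly what the localization machinery in \cite{Sol1976,GS1991} supplies.
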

For our purposes, a very interesting application consists in using this result to find
improved regularity for the solutions of a linear problem in which
$\mathcal{F}=(u\cdot\nabla)\, v$, where $u$ is an assigned function with a given
space-time summability as for instance~\eqref{eq:Shinbrot}.

We have the following lemma.
\begin{lemma}
  \label{lem:maximal-regularity-Oseen}
  Let $\Omega\subset\R^{3}$ be smooth and bounded (or exterior or the full-space or
  even the half-space) and let $u\in L^{r}(0,T;L^{s}(\Omega))$, such that
  $\nabla\cdot u=0$ in $\mathcal{D}'(\Omega)$, for a.e. $t\in [0,T]$. Then, the boundary
  initial value problem
  \begin{equation}
    \label{eq:linear-Stokes2}
    \begin{aligned}
      \partial_{t}v-\Delta v+\nabla \pi&=-(u\cdot\nabla)\, v\qquad &\text{in }(0,T)\times\Omega,
      \\
      \nabla \cdot v&=0 \qquad &\text{in }(0,T)\times\Omega,
      \\
      v(t,x)&=0 \qquad &\text{on }(0,T)\times\partial\Omega,
      \\
      v(0,x)&=0 \qquad &\text{in }\Omega,
    \end{aligned}
  \end{equation}
has a unique solution such that
\begin{equation*}
  v\in L^\infty(0,T;L^2(\Omega))\cap L^2(0,T;H^1_0(\Omega)).
\end{equation*}
Moreover, if the solution $v$ satisfies also 
\begin{equation*}
\nabla v\in L^\alpha(0,T;L^\beta(\Omega))\qquad\text{ for some }1\leq
\alpha,\beta\leq\infty,
\end{equation*}
then the solution $v$ itself satisfies also the following improved estimate 
\begin{equation*}
  \exists\, c>0\quad
  \|\partial_{t}v\|_{L^{\frac{r\alpha}{r+\alpha}}(L^\frac{s\beta}{s+\beta})}+\|P\Delta 
  v\|_{L^{\frac{r\alpha}{r+\alpha}}(L^\frac{s\beta}{s+\beta})}+\|\nabla
  \pi\|_{L^{\frac{r\alpha}{r+\alpha}}(L^\frac{s\beta}{s+\beta})}\leq
  c\,\|u\|_{L^r(L^s)}\|\nabla v\|_{L^\alpha(L^\beta)}, 
\end{equation*}
for some non negative constant $c=c(T,r,s,\alpha,\beta,\Omega)$.
\end{lemma}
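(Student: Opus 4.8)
The plan is to split the statement into two independent parts: first the well-posedness of the linear Oseen--Stokes problem~\eqref{eq:linear-Stokes2} in the Leray--Hopf class $L^\infty(0,T;H)\cap L^2(0,T;V)$, and then the conditional maximal-regularity estimate, which is the part actually exploited in the duality/bootstrap argument. The first part is standard energy theory once one observes that the advection field $u$ is divergence-free; the second is essentially a H\"older estimate feeding into Theorem~\ref{thm:maximal-regularity}.

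For existence I would run a Galerkin scheme in the basis $\{w_k\}$ of eigenfunctions of the Stokes operator (which belong to $V$), writing $v_m=\sum_{k=1}^m c_k^m(t)\,w_k$. Since the problem is linear in $v$ with $u$ a fixed coefficient, the projected equations form a linear first-order ODE system for the $c_k^m$, so local solvability is immediate and globality follows from the a~priori bound below. The crucial point is the energy identity obtained by testing with $v_m$: because $\nabla\cdot u=0$ and $v_m$ vanishes on $\partial\Omega$, the Oseen trilinear term cancels,
\[
\int_\Omega (u\cdot\nabla)\,v_m\cdot v_m\,\dx=\tfrac12\int_\Omega u\cdot\nabla|v_m|^2\,\dx=-\tfrac12\int_\Omega(\nabla\cdot u)\,|v_m|^2\,\dx=0,
\]
so that $\tfrac12\frac{\d}{\d t}\|v_m\|^2+\|\nabla v_m\|^2=0$. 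This yields the uniform bound in $L^\infty(0,T;H)\cap L^2(0,T;V)$ and hence weak-$\ast$ and weak limits $v$ in these spaces. Passing to the limit in the linear term $(u\cdot\nabla)v_m$ is where the integrability of $u$ enters: I would integrate by parts to move the derivative onto the smooth, compactly supported, divergence-free test field and use the energy bound together with $u\in L^r(L^s)$ to justify convergence. The pressure $\pi$ is then recovered by de Rham's theorem from the fact that the limiting functional annihilates divergence-free test fields. Uniqueness is immediate: the difference of two solutions solves the homogeneous problem with zero data, and the same energy identity forces it to vanish.

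For the ``Moreover'' part I would freeze the constructed $v$ and read the Oseen term as a datum, setting $\mathcal{F}:=-(u\cdot\nabla)\,v$. Under the extra hypothesis $\nabla v\in L^\alpha(0,T;L^\beta(\Omega))$, a double H\"older inequality (first in space, then in time) gives
\[
\|(u\cdot\nabla)\,v\|_{L^{\frac{r\alpha}{r+\alpha}}(L^{\frac{s\beta}{s+\beta}})}\le \|u\|_{L^r(L^s)}\,\|\nabla v\|_{L^\alpha(L^\beta)},
\]
since $\tfrac{r+\alpha}{r\alpha}=\tfrac1r+\tfrac1\alpha$ and $\tfrac{s+\beta}{s\beta}=\tfrac1s+\tfrac1\beta$. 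Thus $\mathcal{F}\in L^{p'}(0,T;L^{q'}(\Omega))$ with $p'=\frac{r\alpha}{r+\alpha}$ and $q'=\frac{s\beta}{s+\beta}$, and I would feed this into Theorem~\ref{thm:maximal-regularity}, with $p'$ and $q'$ playing the roles of $\alpha$ and $\beta$ in that statement, to bound $\partial_t v$, $P\Delta v$, and $\nabla\pi$ in $L^{p'}(L^{q'})$; chaining the two estimates produces the claimed inequality.

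The hard part will be the low integrability of $u$: unlike the usual Serrin setting there is no a~priori smallness or subcritical assumption on $(r,s)$, so one must check carefully that $(u\cdot\nabla)\,v$ is a well-defined distribution, that the cancellation of the trilinear form is licit (which requires enough summability of $|v_m|^2$ against $u$, for instance via the Ladyzhenskaya-type interpolation $L^\infty(L^2)\cap L^2(L^6)\subset L^{10/3}(L^{10/3})$), and that the limit passage in $(u\cdot\nabla)v_m$ is justified. A second, more technical, point is admissibility: Theorem~\ref{thm:maximal-regularity} requires $1<p',q'<\infty$, so one must verify that the conjugate-type exponents $\frac{r\alpha}{r+\alpha}$ and $\frac{s\beta}{s+\beta}$ fall strictly inside $(1,\infty)$ in the ranges of interest, treating any borderline case by a density or truncation argument.
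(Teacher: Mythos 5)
Your proposal is correct and follows essentially the same route the paper intends: well-posedness in the energy class via a Galerkin scheme whose trilinear term cancels thanks to $\nabla\cdot u=0$ (exactly as the paper argues for the analogous systems~\eqref{eq:adjoint1}--\eqref{eq:adjoint2}), and then the ``Moreover'' estimate by treating $\mathcal{F}=-(u\cdot\nabla)\,v$ as a forcing term, bounding it with the space--time H\"older inequality in $L^{\frac{r\alpha}{r+\alpha}}(L^{\frac{s\beta}{s+\beta}})$, and invoking the maximal regularity of Theorem~\ref{thm:maximal-regularity}. The paper in fact leaves these steps implicit, so your write-up (including the caveats on the admissible exponent range $1<\frac{r\alpha}{r+\alpha},\frac{s\beta}{s+\beta}<\infty$ and on justifying the cancellation for rough $u$) is, if anything, more detailed than the original.
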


In order to show the requested regularity which will make the calculations justified in
the proof of Theorem~\ref{thm:very-weak} we need to apply the above result several (but a
finite number of) times, making possible a sort of bootstrap. This is the main
difference with the proof of the result for $r=s=4$ from~\cite{Gal2018}, where  a single
step was enough. 

In our case we need to apply an extremely
 sharp interpolation result, which has been
kindly provided to us by Prof. Amann~\cite{Ama2018}.
\begin{lemma}
\label{lem:embedding}
Let $\phi\in W^{1,p}(0,T;L^q(\Omega))\cap L^p(0,T;W^{2,q}\cap
W^{1,q}_0(\Omega))$, with $\phi(0)=0$, for $1<p,q<\infty$. Then, it follows that 
%
%
%
%
%
\begin{equation*}
  \phi\in L^{p_{1}}(0,T;W^{1,q}_0(\Omega))\qquad \text{ for all }p_1\leq p_{*},\ \text{ where
  }\frac{1}{p_{*}}:=\frac{1}{p}-\frac{1}{2}.
\end{equation*}
\end{lemma}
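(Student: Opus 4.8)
The plan is to recast the hypothesis in terms of the Dirichlet Laplacian and then to exploit the smoothing of the associated analytic semigroup. Write $A:=-\Delta$ on $L^q(\Omega)$ with domain $D(A)=W^{2,q}(\Omega)\cap W^{1,q}_0(\Omega)$; for $1<q<\infty$ and smooth bounded $\Omega$ this is a sectorial operator generating a bounded analytic semigroup $(e^{-\tau A})_{\tau\ge 0}$, and a classical result identifies the half-power domain as $D(A^{1/2})=W^{1,q}_0(\Omega)$, with $\|A^{1/2}\psi\|_q$ equivalent to $\|\nabla\psi\|_q$. Thus proving $\phi\in L^{p_*}(0,T;W^{1,q}_0)$ is equivalent to proving $A^{1/2}\phi\in L^{p_*}(0,T;L^q)$. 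The hypotheses say precisely that $\partial_t\phi\in L^p(0,T;L^q)$ and $A\phi\in L^p(0,T;L^q)$, so the function $f:=\partial_t\phi+A\phi$ belongs to $L^p(0,T;L^q)$. Since $\phi(0)=0$, the field $\phi$ is the mild solution of $\partial_t\phi+A\phi=f$ with null datum, and the variation--of--constants formula gives
\begin{equation*}
  \phi(t)=\int_0^t e^{-(t-s)A}f(s)\,\d s .
\end{equation*}

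Applying $A^{1/2}$ and using the standard analytic--semigroup estimate $\|A^{1/2}e^{-\tau A}\|_{\mathcal{L}(L^q)}\le C\tau^{-1/2}$ (valid for $\tau>0$, uniformly on $[0,T]$), I would bound, for a.e. $t$,
\begin{equation*}
  \|A^{1/2}\phi(t)\|_q\le C\int_0^t (t-s)^{-1/2}\,\|f(s)\|_q\,\d s .
\end{equation*}
The right-hand side is the one-dimensional Riesz potential of order $1/2$ applied to the scalar function $s\mapsto\|f(s)\|_q\in L^p(0,T)$. By the Hardy--Littlewood--Sobolev inequality, fractional integration of order $1/2$ maps $L^p(0,T)$ into $L^{p_*}(0,T)$ with $\frac{1}{p_*}=\frac{1}{p}-\frac{1}{2}$, at least for $1<p<2$. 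This yields $A^{1/2}\phi\in L^{p_*}(0,T;L^q)$, hence $\nabla\phi\in L^{p_*}(0,T;L^q)$ together with the boundary condition encoded in $D(A^{1/2})$, that is $\phi\in L^{p_*}(0,T;W^{1,q}_0)$. The statement for arbitrary $p_1\le p_*$ then follows by H\"older's inequality on the finite interval $(0,T)$.

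I expect the main obstacle to be twofold. First, the identification $D(A^{1/2})=W^{1,q}_0(\Omega)$ with equivalent norms is the essential structural input: it rests on the boundedness of the imaginary powers (equivalently the $H^\infty$-calculus) of the Dirichlet Laplacian, which is classical on smooth bounded domains but must be invoked carefully, and it is exactly what allows one to trade the abstract half-power for the concrete gradient. Second, the Hardy--Littlewood--Sobolev step degenerates at the endpoint $p=2$ (where $p_*=\infty$ and fractional integration is only of weak type), and the range $p>2$ needs a separate reading of the conclusion; this borderline behavior is more transparently handled through the mixed-derivative theorem of Sobolevskii type, which gives $\partial_t^{1/2}A^{1/2}\phi\in L^p(0,T;L^q)$, i.e. $A^{1/2}\phi\in W^{1/2,p}(0,T;L^q)$, followed by the one-dimensional fractional Sobolev embedding $W^{1/2,p}(0,T)\hookrightarrow L^{p_*}(0,T)$. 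Either route delivers the same gain of one half in time integrability, which is the heart of the lemma.
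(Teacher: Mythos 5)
Your proof is correct, and the comparison with the paper is necessarily one-sided: the paper gives no proof of Lemma~\ref{lem:embedding} at all, quoting it as a sharp interpolation result provided by Amann in a private communication, to appear in~\cite{Ama2018}; so what you propose is a self-contained substitute for a statement the authors use as a black box. Your route is the explicit parabolic one: writing $f:=\partial_t\phi+A\phi\in L^p(0,T;L^q(\Omega))$ for the Dirichlet Laplacian $A$, the variation-of-constants formula is legitimate because $\phi$ is a strong solution vanishing at $t=0$, and the smoothing estimate $\|A^{1/2}e^{-\tau A}\|_{\mathcal{L}(L^q)}\lesssim \tau^{-1/2}$ combined with the one-dimensional Hardy--Littlewood--Sobolev inequality gives the strong-type bound $A^{1/2}\phi\in L^{p_*}(0,T;L^q(\Omega))$. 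The two caveats you flag are the right ones, and neither is a gap. First, the identification $D(A^{1/2})=W^{1,q}_0(\Omega)$ (in fact only the $L^q$-boundedness of $\nabla A^{-1/2}$ is needed) is the genuine structural input; it is classical for the Dirichlet Laplacian on a smooth bounded domain, which is the paper's setting. Second, the restriction $1<p<2$ imposed by Hardy--Littlewood--Sobolev is exactly the range in which the statement is meaningful, since for $p\geq 2$ the relation $\frac{1}{p_*}=\frac{1}{p}-\frac{1}{2}$ no longer defines a finite positive exponent; it is also the only range in which the lemma is applied, because in the bootstrap of Propositions~\ref{prop:regularity} and~\ref{prop:boot-strap} the time integrability exponent of the forcing stays strictly below $2$ up to the stopping index. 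Note that your argument attains the endpoint $p_1=p_*$, which is precisely the sharpness the authors need: the remark following the lemma observes that the non-sharp classical embedding would only yield Theorem~\ref{thm:very-weak} under the strict inequality $\frac{2}{r}+\frac{2}{s}<1$. Finally, your alternative via the mixed-derivative theorem, $A^{1/2}\phi\in H^{1/2,p}(0,T;L^q(\Omega))$ followed by the fractional Sobolev embedding in time, is presumably close in spirit to Amann's own interpolation-theoretic proof; both routes trade half a time derivative for half a space derivative, which is the heart of the lemma.
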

\begin{remark}
  We observe that in this embedding we have also compactness for any given $p_1<p_*$ and
  this is a classical, e.g.,  see Simon~\cite[Corollary.~8]{Sim1987}).
 We note that the case $r=s=4$ can be treated directly, without Lemma~\ref{lem:embedding}
 and also we observe that if instead of the sharp continuous embedding one uses the
 classical one (not valid in the endpoint case) the same result of
 Theorem~\ref{thm:very-weak} can be obtained, under the more restrictive condition 
$ u\in L^r(0,T;L^s(\Omega))$  \text{with} $\frac{2}{r}+\frac{2}{s}<1$, \text{for} $s>4$. 

\end{remark}

The interpolation type inequality from Lemma~\ref{lem:embedding} allows us to infer the
following result.
\begin{proposition}
  \label{prop:regularity}
  Let $u\in L^r(0,T;L^s(\Omega))$ be given, with
  $\frac{1}{r}+\frac{1}{s}=\gamma\leq\frac{1}{2}$. If the unique solution of
  \eqref{eq:linear-Stokes2} satisfies $\nabla v\in L^\alpha(0,T;L^\beta(\Omega))$, then in
  addition
  \begin{equation}
    \label{eq:improved-regularity}
    \nabla v\in L^{\alpha_1}(0,T;L^{\frac{\beta s}{\beta+s}}(\Omega))\qquad \forall\,
    \alpha_1\leq\Big(\frac{\alpha\, r}{\alpha+r}\Big)_*.
  \end{equation}
\end{proposition}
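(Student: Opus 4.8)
The plan is to combine the maximal-regularity estimate from Lemma~\ref{lem:maximal-regularity-Oseen} with the sharp interpolation embedding of Lemma~\ref{lem:embedding}, applied to the solution $v$ of the Oseen-type problem~\eqref{eq:linear-Stokes2}. The starting point is the assumption $\nabla v\in L^\alpha(0,T;L^\beta(\Omega))$, together with the hypothesis $u\in L^r(0,T;L^s(\Omega))$. Feeding these into the moreover-part of Lemma~\ref{lem:maximal-regularity-Oseen} with forcing $\mathcal F=-(u\cdot\nabla)\,v$, I obtain that $\partial_t v$ and $P\Delta v$ both belong to $L^{p}(0,T;L^{q}(\Omega))$ with
\begin{equation*}
  \frac{1}{p}=\frac{1}{r}+\frac{1}{\alpha},\qquad
  \frac{1}{q}=\frac{1}{s}+\frac{1}{\beta},
\end{equation*}
so that $p=\frac{r\alpha}{r+\alpha}$ and $q=\frac{s\beta}{s+\beta}=\frac{\beta s}{\beta+s}$, which is exactly the target spatial exponent appearing in~\eqref{eq:improved-regularity}.

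First I would verify that $v$ has the regularity required to invoke Lemma~\ref{lem:embedding}: the control on $P\Delta v$ in $L^p(L^q)$ together with elliptic (Stokes) regularity for the stationary problem upgrades this to full second-order spatial regularity $v\in L^p(0,T;W^{2,q}\cap W^{1,q}_0(\Omega))$, while the bound on $\partial_t v$ gives $v\in W^{1,p}(0,T;L^q(\Omega))$; the initial condition $v(0,x)=0$ from~\eqref{eq:linear-Stokes2} supplies the hypothesis $\phi(0)=0$. Here one needs $1<p,q<\infty$, which holds because the exponent relations above keep $p,q$ strictly between $1$ and $\infty$ under the stated ranges (this is where the condition $\gamma=\frac{1}{r}+\frac{1}{s}\leq\frac{1}{2}$ is used, guaranteeing the gain stays in the admissible regime). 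Then Lemma~\ref{lem:embedding} yields
\begin{equation*}
  \nabla v\in L^{p_*}(0,T;L^{q}(\Omega)),\qquad
  \frac{1}{p_*}=\frac{1}{p}-\frac{1}{2}=\frac{1}{r}+\frac{1}{\alpha}-\frac{1}{2},
\end{equation*}
and since $p_*=\bigl(\frac{\alpha r}{\alpha+r}\bigr)_*$ in the notation of the statement, one reads off precisely~\eqref{eq:improved-regularity} for the endpoint $\alpha_1=p_*$; the inclusion for all $\alpha_1\leq p_*$ follows from the boundedness of the time interval and H\"older's inequality in time.

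The main obstacle I anticipate is bookkeeping the exponents so that the interpolation is genuinely applicable at the sharp endpoint rather than only in the interior. Concretely, one must check that the composite time exponent $p=\frac{r\alpha}{r+\alpha}$ satisfies $p>1$ and, crucially, that $\frac{1}{p}>\frac{1}{2}$ so that $p_*$ is finite and positive; the hypothesis $\frac{1}{r}+\frac{1}{s}\leq\frac{1}{2}$ is what forces the spatial gain (from $\beta$ to $\frac{\beta s}{\beta+s}$) and the temporal gain to be compatible, so that the output exponents define an honest Bochner space. I would treat this verification carefully, isolating the borderline case $\gamma=\frac{1}{2}$, since it is exactly the endpoint where Lemma~\ref{lem:embedding} (as opposed to the classical non-sharp Sobolev embedding) is indispensable, as emphasized in the remark following that lemma. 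Everything else is a direct substitution, and no new analytic input beyond the two quoted lemmas and standard Stokes elliptic regularity is required.
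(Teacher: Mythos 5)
Your proposal is correct and follows essentially the same route as the paper: the moreover-part of Lemma~\ref{lem:maximal-regularity-Oseen} gives $\partial_t v,\,P\Delta v\in L^{\frac{r\alpha}{r+\alpha}}(0,T;L^{\frac{s\beta}{s+\beta}}(\Omega))$, and Amann's sharp embedding (Lemma~\ref{lem:embedding}), applicable since $v(0)=0$, then yields $\nabla v\in L^{\alpha_1}(0,T;L^{\frac{\beta s}{\beta+s}}(\Omega))$ for all $\alpha_1\leq\bigl(\frac{\alpha r}{\alpha+r}\bigr)_*$. Your explicit intermediate step of upgrading the $P\Delta v$ bound to $v\in L^p(0,T;W^{2,q}\cap W^{1,q}_0(\Omega))$ via stationary Stokes regularity, and your attention to the exponent constraints $1<p<2$ needed for $p_*$ to be a genuine finite exponent, are exactly the (implicit) ingredients of the paper's argument.
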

We observe that since by definition
$1/\Big(\frac{\alpha\,r}{\alpha+r}\Big)_*=\frac{\alpha+r}{\alpha\, r}-\frac{1}{2}$, then the
following equalities hold true
\begin{equation*}
  \frac{\alpha+r}{\alpha\, r}-\frac{1}{2}+\frac{\beta+s}{\beta\, s}
=\frac{1}{r}+\frac{1}{s}-\frac{1}{2}+
\frac{1}{\alpha}+\frac{1}{\beta}=\gamma-\frac{1}{2}+\frac{1}{\alpha}+\frac{1}{\beta},
\end{equation*}
hence the value $\gamma=\frac{1}{2}$ is exactly the one which keeps the mixed space-time
regularity unchanged. Since we will need this to iterate the procedure to infer further
regularity for the solution of the linear problem, we give now the precise regularity
which is obtained by using repeatedly the same argument.  To this aim let
$(\alpha_1,\beta_1)=(2,2)$ (this because the starting point is the energy space, that is
$\nabla v\in L^2((0,T)\times \Omega)$), we define then by recurrence
\begin{equation*}
  \alpha_{n+1}:=\Big(\frac{\alpha_n\, r}{\alpha_n+r}\Big)_*\qquad \text{and}
  \qquad \beta_{n+1}:=\frac{\beta_n \,s}{\beta_n+s}.
\end{equation*}
We observe that 
\begin{equation}
    \label{eq:iteration}
    \frac{1}{\alpha_{n+1}}+\frac{1}{\beta_{n+1}}=
   \gamma-\frac{1}{2}+ \frac{1}{\alpha_n}+\frac{1}{\beta_n}\qquad \forall\,n\in\N,    
  \end{equation}
  and taking into account that the iteration stops if the limiting values $1$ or $+\infty$
  for either $\alpha_{n+1}$ or $\beta_{n+1}$ are reached.
  \begin{remark}
    By direct computations using the recurrence definition, it follows that
    $\beta_{n+1}\leq \beta_n$, and consequently $\alpha_{n+1}\geq \alpha_n$. Hence, the
    use of the maximal regularity applied to the linear problem decreases the available
    regularity in the space variables, but increases the one of the time variable.
  \end{remark}

Concerning the linear Stokes problem we have the following result
\begin{proposition}
\label{prop:boot-strap}
  Let $u\in L^r(0,T;L^s(\Omega))$ be given with
  $\frac{1}{r}+\frac{1}{s}=\gamma\leq \frac{1}{2}$. Then, the unique solution $v$ of
  \eqref{eq:linear-Stokes2} satisfies
  \begin{equation}
    \label{eq:improved-regularity2}
    \nabla v\in L^{\widetilde{\alpha}_n}(0,T;L^{\beta_n}(\Omega)),
  \end{equation}
  for all $\widetilde{\alpha}_n\leq\alpha_n$, where the couple $(\alpha_n,\beta_n)$ is
  defined as in~\eqref{eq:iteration}. Hence, by H\"older inequality we have also 
\begin{equation}
  \label{eq:eeee}
    (u\cdot\nabla)\, v,\,\nabla \pi\in
    L^{\frac{\widetilde{\alpha}_n \,r}{\widetilde{\alpha}_n+r}}(0,T;L^{\frac{\beta_n \,s}{\beta_n+s}}(\Omega)),
\end{equation}
and
\begin{equation*}
  \frac{1}{{\frac{\widetilde{\alpha}_n\,
        r}{\widetilde{\alpha}_n+r}}}+\frac{1}{{\frac{\beta_n\,s}{\beta_n+s}}}\leq\gamma+1. 
\end{equation*}
\end{proposition}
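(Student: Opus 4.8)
The plan is to prove \eqref{eq:improved-regularity2} by induction on $n$, using Proposition~\ref{prop:regularity} as the engine of a single bootstrap step and the recurrence \eqref{eq:iteration} to bookkeep the exponents. The existence and uniqueness of $v$, together with its base regularity, are already furnished by the first part of Lemma~\ref{lem:maximal-regularity-Oseen}: one has $v\in L^\infty(0,T;L^2(\Omega))\cap L^2(0,T;H^1_0(\Omega))$, hence $\nabla v\in L^2(0,T;L^2(\Omega))$. This is precisely the base case $n=1$ with $(\alpha_1,\beta_1)=(2,2)$, which is exactly why the recursion is initialized at the energy space.

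For the inductive step I would assume $\nabla v\in L^{\alpha_n}(0,T;L^{\beta_n}(\Omega))$ and apply Proposition~\ref{prop:regularity} with the choice $(\alpha,\beta)=(\alpha_n,\beta_n)$. Its conclusion \eqref{eq:improved-regularity} yields $\nabla v\in L^{\alpha_{n+1}}(0,T;L^{\beta_{n+1}}(\Omega))$ with $\alpha_{n+1}=\bigl(\tfrac{\alpha_n r}{\alpha_n+r}\bigr)_*$ and $\beta_{n+1}=\tfrac{\beta_n s}{\beta_n+s}$, which is exactly the couple defined by the recurrence preceding \eqref{eq:iteration}; this advances the induction. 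The membership in \eqref{eq:improved-regularity2} for every $\widetilde\alpha_n\le\alpha_n$ is then immediate, since on the finite interval $(0,T)$ one has the continuous inclusion $L^{\alpha_n}(0,T;L^{\beta_n}(\Omega))\hookrightarrow L^{\widetilde\alpha_n}(0,T;L^{\beta_n}(\Omega))$.

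Once \eqref{eq:improved-regularity2} is in hand, the products in \eqref{eq:eeee} follow directly: applying H\"older in the time variable with exponents $r$ and $\widetilde\alpha_n$, and in the space variable with exponents $s$ and $\beta_n$, gives $(u\cdot\nabla)\,v\in L^{\frac{\widetilde\alpha_n r}{\widetilde\alpha_n+r}}(0,T;L^{\frac{\beta_n s}{\beta_n+s}}(\Omega))$, since $u\in L^r(0,T;L^s(\Omega))$, and the same membership for $\nabla\pi$ is read off from the maximal-regularity estimate \eqref{eq:max-regularity} of Theorem~\ref{thm:maximal-regularity} applied with $\mathcal F=-(u\cdot\nabla)\,v$. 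For the final scaling inequality I note that $\tfrac{1}{\frac{\widetilde\alpha_n r}{\widetilde\alpha_n+r}}+\tfrac{1}{\frac{\beta_n s}{\beta_n+s}}=\tfrac{1}{\widetilde\alpha_n}+\tfrac1r+\tfrac{1}{\beta_n}+\tfrac1s=\gamma+\tfrac{1}{\widetilde\alpha_n}+\tfrac{1}{\beta_n}$, so that, for the relevant maximal choice $\widetilde\alpha_n=\alpha_n$, it suffices to check $\tfrac{1}{\alpha_n}+\tfrac{1}{\beta_n}\le 1$; iterating \eqref{eq:iteration} from the base value $\tfrac{1}{\alpha_1}+\tfrac{1}{\beta_1}=1$ gives $\tfrac{1}{\alpha_n}+\tfrac{1}{\beta_n}=1+(n-1)\bigl(\gamma-\tfrac12\bigr)\le 1$, because $\gamma\le\tfrac12$, which delivers the bound $\le\gamma+1$.

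The step I expect to be the main obstacle is not any single estimate but rather guaranteeing that the bootstrap is legitimately applicable at every stage. Proposition~\ref{prop:regularity} rests on the sharp embedding of Lemma~\ref{lem:embedding}, which requires both intermediate exponents $\tfrac{\alpha_n r}{\alpha_n+r}$ and $\tfrac{\beta_n s}{\beta_n+s}$ to lie strictly in $(1,\infty)$. Since the Remark following \eqref{eq:iteration} shows $\beta_{n+1}\le\beta_n$ and $\alpha_{n+1}\ge\alpha_n$, the space exponent decreases and the time exponent increases at each step, so the iteration drifts toward the forbidden endpoints; the care lies in running the process only as long as these exponents stay admissible and in halting it exactly when the limiting values $1$ or $+\infty$ are attained, as anticipated after \eqref{eq:iteration}. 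This same monotonicity is what makes the scheme terminate in finitely many steps, which is precisely what is needed for the bootstrap in the proof of Theorem~\ref{thm:very-weak}.
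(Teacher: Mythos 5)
Your proof is correct and takes essentially the same approach the paper intends: the paper states this proposition without a separate proof, precisely because it is the finite iteration of Proposition~\ref{prop:regularity} starting from the energy-space base case $(\alpha_1,\beta_1)=(2,2)$ supplied by Lemma~\ref{lem:maximal-regularity-Oseen}, with \eqref{eq:iteration} doing the bookkeeping and H\"older plus Theorem~\ref{thm:maximal-regularity} giving \eqref{eq:eeee} --- exactly your induction. Your two refinements are also sound readings of the paper: the final inequality should indeed be checked at the maximal choice $\widetilde{\alpha}_n=\alpha_n$ (smaller $\widetilde{\alpha}_n$ only worsens the left-hand side), and your admissibility caveat on the intermediate exponents is the same restriction the paper records after \eqref{eq:iteration}, namely that the iteration stops when the limiting values $1$ or $+\infty$ are reached.
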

Observe that, while in the time variable we can consider by H\"older inequality also
exponents smaller than $\alpha_n$, in the space variables, this is not possible when the
domain $\Omega$ is unbounded, and with infinite Lebesgue measure.

In terms of scaling the result of Proposition~\ref{prop:boot-strap} can be viewed as a
modulated and controlled exchange of regularity between space-and time (which resembles
some of the estimates obtainable with Fourier analysis and has connections also with the
early estimates for the gradient of the vorticity in $L^{4/3+\varepsilon}((0,T)\times\Omega)$
from~\cite{Con1990}). The above result shows that the standard theory of linear parabolic
equations can be used to obtain a full scale of results with similar regularity drain from
some variables towards others.  This will be enough for our purposes of applications to
very-weak solutions.
\section{On the energy equality for Leray-Hopf weak solutions.}
\label{sec:weak}
Before proceeding with the proof of Theorem~\ref{thm:grad-ranges}, let us recall that we
mainly need to show that the conditions in the statement of Theorem~\ref{thm:grad-ranges}
are sufficient to show that the double integral $\int_0^T(u\cdot\nabla u,u)\,\dt$ is
finite, hence that~\eqref{eq:fundamental} holds true. The calculations leading
to~\eqref{eq:energy-equality} can be justified by approximating $u$ by the family of
smoother function $(u)_\varepsilon=k_\varepsilon*_t u$ and then taking the limit for
$\varepsilon\to0^+$.  The (time) mollification operator, denoted in the sequel by
$(\cdot)_\varepsilon$, is defined for a space-time function $\Phi:\,(0,T)\times \Omega\to\R^n$ by
\begin{equation}
  \label{eq:mollification}
  (\Phi)_{\varepsilon}(t,x) :=\int_0^{T}
  k_\varepsilon(t-\tau)\Phi(\tau,x)\,\d\tau\qquad \text{for }0<\varepsilon<T . 
\end{equation}
It is a standard Friederichs mollification with respect to the time variable, where $k$ is
a $C^\infty_0(\R)$, real-valued, non-negative even function, supported in $[-1, 1]$ with
$\int_{\R} k(s)\,\ds=1$ and, as usual, $k_\varepsilon(t):=\varepsilon^{-1}k(t/\varepsilon)$.

Before starting the proof we recall the following property of weak solutions, which is a
special case of a result by Hopf~\cite{Hop1951}.
\begin{lemma}
  \label{lem:hopf}
  Let $u_0\in H$ and let $u$ be a Leray-Hopf weak solution of~\eqref{eq:NS}.  Then $u$ can
  be redefined on a set of zero Lebesgue measure in such a way that $u(t)\in L^2(\Omega)$
  for all $t\in[0,T)$ and it satisfies the identity
\begin{equation} 
  \label{eq:Hopf equality}
  (u(s), \phi(s))= (u_0, \phi(0))+\int_0^T \left[\left(u,\frac{\partial\phi}{\partial
        t}\right)- ((u \cdot \nabla)\, u, \phi)- (\nabla u, \nabla \phi)\right]\,\d \tau, 
\end{equation}
for all $\phi \in C^\infty_0([0,T[;C^\infty_0(\Omega))$, with $\nabla\cdot \phi=0$ and all
$0\leq s<T$. 
\end{lemma}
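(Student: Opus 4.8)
The statement contains two assertions — the existence of a representative of $u$ that is defined at \emph{every} time with values in $L^2(\Omega)$, and the validity of \eqref{eq:Hopf equality} — and both follow once one recovers time-continuity of $u$ directly from the equation. The plan is therefore to first locate $\partial_t u$ in a suitable $V'$-valued Bochner space, and then to localise in time the distributional formulation of Definition~\ref{def:weak}(i). For the first point I would read off from the momentum equation that, as an identity in $V'$, $\partial_t u=\Delta u-P[(u\cdot\nabla)u]$, and estimate the two pieces separately. The viscous part obeys $\|\Delta u\|_{V'}\le\|\nabla u\|\in L^2(0,T)$, since $\langle\Delta u,\phi\rangle=-(\nabla u,\nabla\phi)$. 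For the convective part I would integrate by parts, $((u\cdot\nabla)u,\phi)=-(u\otimes u,\nabla\phi)$ (no boundary term, as $\phi$ is compactly supported in $\Omega$), so that $|((u\cdot\nabla)u,\phi)|\le\|u\|_{L^4}^2\,\|\nabla\phi\|$. The three-dimensional Gagliardo--Nirenberg inequality $\|u\|_{L^4}\le C\|u\|^{1/4}\|\nabla u\|^{3/4}$ then gives $\|u\|_{L^4}^2\le C\|u\|^{1/2}\|\nabla u\|^{3/2}$, which, using $u\in L^\infty(0,T;H)$ together with $\nabla u\in L^2((0,T)\times\Omega)$, belongs to $L^{4/3}(0,T)$. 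Hence $\partial_t u\in L^{4/3}(0,T;V')$.

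Next I would upgrade the time regularity to continuity. Since $u\in W^{1,4/3}(0,T;V')$ we obtain $u\in C([0,T];V')$; combining this with $u\in L^\infty(0,T;H)$ through the classical Lions--Magenes/Strauss weak-continuity lemma yields, after redefinition on a set of zero measure, $u\in C_w([0,T];H)$. In particular $u(t)\in H\subset L^2(\Omega)$ is well defined for every $t\in[0,T)$, which is precisely the first assertion of the lemma; moreover $\tau\mapsto(u(\tau),\phi(\tau))$ is continuous for any $\phi$ that is continuous in time with values in $H$, since $(u(\tau),\phi(\tau))=(u(\tau),\phi(s))+(u(\tau),\phi(\tau)-\phi(s))$, where the first term converges by weak continuity and the second is bounded by $\|u\|_{L^\infty(H)}\|\phi(\tau)-\phi(s)\|$.

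Finally I would derive \eqref{eq:Hopf equality} by localisation in time. Starting from the distributional identity of Definition~\ref{def:weak}(i), I would insert the admissible test field $\phi(\tau,x)\,\theta_\epsilon(\tau)$, where $\theta_\epsilon$ is Lipschitz, equal to $1$ on $[0,s]$ and affinely decreasing to $0$ on $[s,s+\epsilon]$. The only new contribution comes from differentiating $\theta_\epsilon$, namely $-\tfrac1\epsilon\int_s^{s+\epsilon}(u,\phi)\,\d\tau$, which tends to $-(u(s),\phi(s))$ as $\epsilon\to0^+$ by the continuity of $\tau\mapsto(u(\tau),\phi(\tau))$ established above, while every remaining term converges by dominated convergence to its integral over $[0,s]$. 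Rearranging gives exactly \eqref{eq:Hopf equality}, the time integral being naturally restricted to $[0,s]$, which is what makes the left-hand side genuinely depend on $s$.

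I expect the delicate point to be the passage from almost-every $s$ to every $s$: the convergence of the averaged boundary term $\tfrac1\epsilon\int_s^{s+\epsilon}(u,\phi)\,\d\tau$ at an \emph{arbitrary} time hinges on the weakly-continuous representative produced in the second step, and not merely on a Lebesgue-point argument, which would only yield the identity for almost every $s$. The remaining ingredients — the integrability in time of the nonlinear term and the continuity of $u$ into $V'$ — are already secured by the Gagliardo--Nirenberg estimate of the first step, so the rest is routine verification.
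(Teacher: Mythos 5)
The paper gives no proof of this lemma at all: it is quoted as a special case of a result of Hopf~\cite{Hop1951}, whose classical argument (reproduced, e.g., in Galdi~\cite{Gal2000a}) runs differently from yours, so the comparison must be made against that. The classical route works directly with the integral identity: for fixed $v$ the right-hand side is a continuous function of $s$, hence $s\mapsto(u(s),v)$ agrees a.e.\ with a continuous function; combining this with the bound $u\in L^\infty(0,T;H)$ and weak compactness of balls in $L^2(\Omega)$, one redefines $u(s)$ at \emph{every} $s$ as the weak limit of $u(s_n)$ along Lebesgue points $s_n\to s$, and the identity then extends to all $s$ by continuity. You instead extract $\partial_t u\in L^{4/3}(0,T;V')$ from the equation (via $\|u\|_{L^4}^2\le C\|u\|^{1/2}\|\nabla u\|^{3/2}$), deduce $u\in C([0,T];V')$, and invoke the Strauss/Lions--Magenes lemma to produce the weakly continuous representative, after which your cutoff localisation yields the identity for every $s$. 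Both arguments are correct; yours buys the quantitative by-product $\partial_t u\in L^{4/3}(0,T;V')$ (of independent use, e.g.\ for Aubin--Lions compactness), at the price of being dimension-specific (the exponent $4/3$ relies on the 3D Gagliardo--Nirenberg inequality, whereas the Lebesgue-point argument is essentially dimension free), and you correctly isolated the genuinely delicate point: it is weak continuity, not a Lebesgue-point argument, that upgrades the identity from a.e.\ $s$ to all $s$. Two routine steps should still be spelled out: the Lipschitz cutoff $\theta_\epsilon$ is not admissible in Definition~\ref{def:weak}(i), which demands test functions smooth in time, so you must mollify $\theta_\epsilon$ and pass to the limit first; and the identification of $\partial_t u$ as a functional on all of $V$ requires extending the distributional formulation from $\mathcal{V}$ to $V$ by density. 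Finally, you read the upper integration limit in \eqref{eq:Hopf equality} as $s$ rather than the printed $T$; this is indeed how the paper itself uses the lemma (cf.~\eqref{eq:regularised_equality} with $s=t_0$), so your interpretation of the statement is the right one.
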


We observe that from the above Lemma it also follows that $u$ is continuous as a map
$[0,T]$ with values in $L^2(\Omega)$, if endowed with the weak topology, that is 
\begin{equation*}
  \forall\,t_0\in[0,T)\qquad  \lim_{t\to t_0}(u(t)-u(t_0),\phi)=0\qquad \forall\,\phi\in
  L^2(\Omega). 
\end{equation*}

\bigskip

The procedure of mollifying and taking the limits clearly explained for instance in
Galdi~\cite[Sec.~4]{Gal2000a}, even if we need to make some changes and adapt to our
setting the results.  Anyway, one has mainly to use the following two facts
\begin{equation}
\label{eq:properties-time-mollifier}
  \begin{aligned}
    & \int_0^T(\nabla u,\nabla (u)_\varepsilon)\,\dt=\int_0^T(\nabla u,(\nabla
    u)_\varepsilon)\,\dt\overset{\varepsilon\to0^+}{\longrightarrow} 
    \int_0^T\|\nabla u\|^2\,\dt,
    \\
    &      (u(t),(u)_\varepsilon(t))=\frac{\|u(t)\|^2}{2}+\mathcal{O}(\varepsilon).
  \end{aligned}
\end{equation}
which come from the choice of the mollifier and the first relies on the fact that $u\in
L^2(0,T;V)$, while the second comes from the weak-$L^2$-continuity of $u$ as a Leray-Hopf
weak solution.

Next, the probably most relevant point is to show that
\begin{equation*}
\int_0^T((u\cdot\nabla)\, u,
    u_\varepsilon)\,\dt\overset{\varepsilon\to0^+}{\longrightarrow}0,
\end{equation*}
under the assumption of the theorem, since this step cannot be deduced --at present--
from the properties of weak solution.

We can now give the proof of the first result of this paper.

\begin{proof}[Proof of Theorem~\ref{thm:grad-ranges}]
  The proof follows some rather standard arguments but relies also on some new estimates
  for the nonlinear term.  Let $0<T<+\infty$ be given and let us fix $t_0$ with $0<t_0\leq
  T$.  Now, let $\{u_m\} \subset C^\infty_0([0,T[;C^\infty_0(\Omega))$ be a sequence
  converging to $u$ in $L^2(0,T; V)\cap L^p(0,T; W^{1,q}_0(\Omega))$, whose existence
  comes from standard density results.  We choose in~\eqref{eq:Hopf equality} $s=t_0$ and
  as legitimate test function $\phi=(u_m)_\varepsilon$, for some $0<\varepsilon<t_0$. In this
  way we get the identity
\begin{equation}
  \begin{aligned}
    \label{eq:regularised_equality}
    (u(t_0), (u_m)_\varepsilon(t_0))&= (u_0, (u_m)_\varepsilon(0))
    \\
    &+\int_0^{t_0} \left[\left(u,\frac{\partial(u_m)_\varepsilon}{\partial t}\right)- ((u
      \cdot \nabla)\, u, (u_m)_\varepsilon)- (\nabla u, (\nabla
      u_m)_\varepsilon)\right]\,\d t.
  \end{aligned}
\end{equation}
Our first goal is to study in the previous equality, taking first the limit for
$m\rightarrow \infty$, with $\varepsilon>0$ fixed, and then the limit as
$\varepsilon\to0^+$.  As usual, in passing to the limit the term which requires more care
is the nonlinear one.  So we focus on the following
\begin{equation*}
  \int_0^{t_0} ((u \cdot \nabla)\, u, (u_m)_\varepsilon)\, \d t=  \int_0^{t_0}  \int_0^{t_0}
  k_\varepsilon (t-\tau) ((u(t) \cdot \nabla)\, u(t), u_m (\tau)) \,\d \tau \dt.
\end{equation*}
For this purpose we split it into three terms as follows
\begin{equation}
  \label{eq:to-prove}
  \begin{aligned}
    \int_0^{t_0} ((u \cdot \nabla)\, u, (u_m)_\varepsilon) \,\d t =& \int_0^{t_0} ((u \cdot
    \nabla)\, u, (u_m)_\varepsilon-(u)_\varepsilon) \,\d t
    \\
    &+\int_0^{t_0} ((u \cdot \nabla)\, u, (u)_\varepsilon-u)\, \d t
    +\int_0^{t_0} ((u \cdot \nabla)\, u, u) \,\d t,
\end{aligned}
\end{equation}
and show that: a) the first term from the right-hand side converges to zero, as
$m\to+\infty$, with $\varepsilon>0$ fixed; b) the second term from the right-hand side
converges to zero, as $\varepsilon\to0^+$; c) the last term one is exactly zero, provided
that $u$ is a weak solutions satisfying one of the conditions of
Theorem~\ref{thm:grad-ranges}. We note that the splitting is a little bit different from
the usual one employed in~\cite{Pro1959,Lio1960,Shi1974}, since the hypotheses are now on
the gradient of the velocity, instead of the velocity itself. For this reason we give
detailed proofs of the steps which are not the same as in the previous papers.

We start by handling the point c) regarding the third term and proving that
\begin{equation} 
  \label{eq:tesi 1}
  \int_0^{t_0} ((u \cdot \nabla)\, u, u) \,\d t=0,
\end{equation}
distinguishing among the three ranges given by Theorem~\ref{thm:grad-ranges}.

Let $\{u_m\} \subset C^\infty_0([0,T[;C^\infty_0(\Omega))$ be a sequence converging to $u$
in the space $L^\infty(0,T;H)\cap L^2(0,T;V)\cap L^p(0,T;\overset{.}{W}^{1,q}(\Omega))$,
where $\overset{.}{W}^{1,q}(\Omega)$ denotes as usual the homogeneous space, endowed with
the semi-norm $\|\nabla u\|_{L^q}$.  Since the field $u_m$ is smooth, integrating by parts
and since is redefined in such a way that $u(t)\in H$ for all $t\in[0,T)$, we get
\begin{equation*}
  \int_0^{t_0} ((u \cdot \nabla)\, u_m, u_m)\,\d t=0.
\end{equation*}
Thus, if we are able to show that 
\begin{equation*}
  \int_0^{t_0} ((u \cdot \nabla)\, u_m, u_m)\,\d t \rightarrow \int_0^{t_0} ((u \cdot
  \nabla)\, u, u)\,\d t, 
\end{equation*}
this would imply~\eqref{eq:tesi 1}.  To this end let us we handle the absolute value of
the difference as follows
\begin{equation*}
  \begin{aligned}
    &\abs{\int_0^{t_0} ((u \cdot \nabla)\, u_m, u_m)\,\d t - \int_0^{t_0} ((u \cdot
      \nabla)\, u, u)\,\d t}
    \\
    &\leq \abs{\int_0^{t_0} ((u \cdot \nabla)\, u_m, (u_m-u))\,\d t} +\abs{ \int_0^{t_0}
      ((u \cdot \nabla)\, (u_m- u), u)\,\d t}.
  \end{aligned}
\end{equation*}
and we will show the convergence of the two terms from the right hand side to zero in the
three different cases.

In the sequel we will use the notation $a\lesssim b$ to denote that there exists a
constant $C>$, depending only on $p,q,\Omega,T$ through the various inequalities valid in
Lebesgue and Sobolev spaces, but not depending on the solution $u$ itself, such that
$a\leq C\,b$.

\bigskip

\begin{itemize}
\item[$(i)$] In the case $\frac{3}{2}< q<\frac{9}{5}$ we have $\nabla u\in
  L^p(0,T;L^q(\Omega))$ with $p=\frac{q}{2q-3}$.  Let us note that in this range we have
  $q^\ast<2q' <6$, where $1/q'+1/q=1$ and $q'$ is the conjugate exponent of $q$.  Hence,
  we can interpolate and write that $\widetilde{q}:=2q'$ satisfies
  $1/\widetilde{q}=\theta/q^\ast+(1-\theta)/6$, which gives us $\theta:= (3-2q)/(3(q-2))$.
  Now, we can estimate, by the Sobolev embedding theorems, the space integral, obtaining
\begin{align*}
  \abs{ \int_0^{t_0} ((u \cdot \nabla)\,(u_m- u), u)\,\d t} &\leq \abs{ \int_0^{t_0}
    \norm{u}_{\widetilde{q}} \norm{\nabla (u_m- u)}_q \norm{u}_{\widetilde{q}}\,\d t}
  \\
  &\leq \abs{ \int_0^{t_0} \norm{u}_{q^\ast}^{2\theta} \norm{
      u}_{6}^{2(1-\theta)}\norm{\nabla (u_m- u)}_q \,\d t}
  \\
  &\lesssim \abs{ \int_0^{t_0} \norm{\nabla u}_{q}^{2\theta} \norm{ \nabla
      u}_{2}^{2(1-\theta)}\norm{\nabla (u_m- u)}_q \,\d t}
  \\
  &\lesssim \norm{\nabla u}^{2\theta}_{p,q} \norm{ \nabla u}^{2(1-\theta)}_{2, 2}
  \norm{\nabla (u_m- u)}_{p,q},
\end{align*}
where we have applied in the last step H\"older inequality in the time variable with
exponents $x,y,z$ such that $2(1-\theta)x=2$, $z=p$, and $2\theta y=p$, and thus
satisfying $1/x+1/y+1/z=1$.  This shows that $\int_0^{t_0} ((u \cdot \nabla)\,(u_m-u),
u)\,\d t\rightarrow 0$, as $m\to+\infty$.

Analogously, we have also the following estimates
\begin{align*}
  &\abs{ \int_0^{t_0} ((u \cdot \nabla)\,u_m ,(u_m-u))\,\d t}
  \\
  &\leq \abs{ \int_0^{t_0} \norm{u}_{\widetilde{q}} \norm{\nabla u_m}_q
    \norm{u_m-u}_{\widetilde{q}}\,\d t}
  \\
  &\leq\abs{ \int_0^{t_0} \norm{u}_{q^\ast}^{\theta} \norm{
      u}_{6}^{(1-\theta)}\norm{\nabla u_m}_q \norm{u_m-u}_{q^\ast}^{\theta} \norm{
      u_m-u}_{6}^{(1-\theta)}\,\d t}
  \\
  &\lesssim \abs{ \int_0^{t_0} \norm{\nabla u}_{q}^{\theta} \norm{ \nabla
      u}_{2}^{(1-\theta)}\norm{\nabla u_m}_q \norm{\nabla (u_m-u)}_{q}^{\theta} \norm{
      \nabla (u_m-u)}_{2}^{(1-\theta)}\d t}
  \\
  &\lesssim \norm{\nabla u}^\theta_{p,q} \norm{ \nabla u}^{(1-\theta)}_{2, 2} \norm{\nabla u_m}_{p,q} \norm{\nabla
    (u_m-u)}^\theta_{p,q} \norm{ \nabla (u_m-u)}^{(1-\theta)}_{2, 2},
\end{align*}
where we used H\"older inequality in the time variable with exponents $\tilde{x}=2x$ for
both the terms $\norm{ \nabla u}_{2}^{(1-\theta)}$ and $\norm{ \nabla
  (u_m-u)}_{2}^{(1-\theta)}$, with $\tilde{y}=2y$ for both the terms $\norm{\nabla
  u}_{q}^{\theta}$ and $\norm{\nabla (u_m-u)}_{q}^{\theta}$, and with $\tilde{z}=z$ for
the term $\norm{\nabla u_m}_q$.  This implies that, as $m\to+\infty$ it holds
$\int_0^{t_0} ((u \cdot \nabla)\, u_m, (u_m-u))\,\d t\rightarrow 0$. 

\bigskip

The case $(ii)$ is better handled when split into  two sub-cases.

\medskip

\item[$(ii)_1$] In the case $\frac{9}{5}\leq q <\frac{12}{5}$ we have $\nabla u\in
  L^p(0,T; L^q(\Omega))$ with $p=\frac{5q}{5q-6}$.  Let us note that in this range we have
  $2<2q'<q^\ast $, where $1/q'+1/q=1$.  Hence, we can interpolate and write that
  $\widetilde{q}:=2q'$ satisfies $1/\widetilde{q}=\theta/2+(1-\theta)/q^\ast$, which
  implies $\theta:= (5q-9)/(5q-6)$.  Now, by using the Sobolev embedding theorems, we show
  that
\begin{align*}
  \abs{ \int_0^{t_0} ((u \cdot \nabla)\,(u_m- u), u)\,\d t} &\leq \abs{ \int_0^{t_0}
    \norm{u}_{\widetilde{q}} \norm{\nabla (u_m- u)}_q \norm{u}_{\widetilde{q}}\,\d t}
  \\
  &\leq \abs{ \int_0^{t_0} \norm{u}_{2}^{2\theta} \norm{
      u}_{q^\ast}^{2(1-\theta)}\norm{\nabla (u_m- u)}_q \,\d t}
  \\
  &\lesssim \abs{ \int_0^{t_0} \norm{u}_{2}^{2\theta} \norm{ \nabla
      u}_{q}^{2(1-\theta)}\norm{\nabla (u_m- u)}_q \,\d t}
  \\
  &\lesssim \norm{u}_{\infty,2}^{2\theta} \norm{ \nabla u}^{2(1-\theta)}_{p, q} \norm{\nabla (u_m-
    u)}_{p,q},
\end{align*}
thanks to H\"older inequality in the time variable with exponents $\gamma_1,\gamma_2$ such
that $2(1-\theta)\gamma_1=p$, $\gamma_2=p$, with $1/\gamma_1+1/\gamma_2=1$.  This shows
that $ \int_0^{t_0} ((u \cdot \nabla)\,(u_m- u), u)\,\d t\rightarrow 0$. The term
$\abs{\int_0^{t_0} ((u \cdot \nabla)\,u_m, (u_m-u))\,\d t}$ can be treated in a very
similar way, so that to conclude that $\int_0^{t_0} ((u \cdot \nabla)\,u_m, u_m)\,\d
t\rightarrow \int_0^{t_0} ((u \cdot \nabla)\,u, u)\,\d t$. 

\bigskip

\item[$(ii)_2$] In the case $\frac{12}{5}\leq q \leq 3$ we have $\nabla u\in L^p(0,T;
  L^q(\Omega))$ with $p=\frac{5q}{5q-6}$.  Let us denote by $r$ the exponent such that
  $1/r+1/q=1/2$.  Let us note that in this range we have $r<q^\ast$.  Hence, we can use
  the convex interpolation between $2$ and $q^*$ to write that $r$ satisfies
  $1/r=\theta/2+(1-\theta)/q^\ast$, which gives us $\theta:= (5q-12)/(5q-6)$.  Now,  by
  the Sobolev embedding theorems, we can
  estimate the integral as follows
\begin{align*}
  \abs{ \int_0^{t_0} ((u \cdot \nabla)\,(u_m- u), u)\,\d t} &\leq \abs{ \int_0^{t_0}
    \norm{u}_{2} \norm{\nabla (u_m- u)}_q \norm{u}_{r}\,\d t}
  \\
  &\leq \abs{ \int_0^{t_0} \norm{u}_{2}^{1+\theta} \norm{\nabla (u_m- u)}_q
    \norm{u}_{q^\ast}^{1-\theta}\,\d t}
  \\
  &\lesssim \abs{ \int_0^{t_0} \norm{u}_{2}^{1+\theta} \norm{\nabla (u_m- u)}_q \norm{\nabla
      u}_{q}^{1-\theta}\,\d t}
  \\
  &\lesssim \norm{u}_{\infty,2}^{1+\theta} \norm{\nabla (u_m- u)}_{p,q} \norm{ \nabla
    u}^{1-\theta}_{p, q},
\end{align*}
where we have applied H\"older inequality with respect to the time variable with exponents
$\alpha_1, \alpha_2$ such that $(1-\theta)\alpha_1=p$ and $\alpha_2=p$, with
$1/\alpha_1+1/\alpha_2=1$.  For the the other term we can argue similarly and thus this
shows that $\int_0^{t_0} ((u \cdot \nabla)\,u_m, u_m)\,\d t \rightarrow \int_0^{t_0} (u
\cdot\nabla u, u)\,\d t$ in this case, as well.

The case $q=3$ can be treated in a mostly similar way, by observing that
    $W^{1,3}(\Omega)\subset L^r(\Omega)$, for all finite $3\leq r$, while the rest of the
    proof remains unchanged.

\bigskip

\item[$(iii)$] In case $q>3$ we have $\nabla u\in L^p(0,T; L^q(\Omega))$ with
  $p=1+\frac{2}{q}$.  Let us denote by $q'$ the conjugate exponent of $q$,
  i.e. $1/q'+1/q=1$ and by $1<\widetilde{q}:=2q'<\infty$.  We estimate the
  $L^{\widetilde{q}}(\Omega)$-norm from convex interpolation between $L^2(\Omega)$ and
  $L^\infty(\Omega)$, with $1/\widetilde{q}=\theta/2$, which gives us
  $\theta:=1-\frac{1}{q}$.  Now, by Sobolev embedding theorems
\begin{align*}
\abs{ \int_0^{t_0} ((u \cdot \nabla)\,(u_m- u), u)\,\d t} &\leq \abs{ \int_0^{t_0}
  \norm{u}_{\widetilde{q}} \norm{\nabla (u_m- u)}_q \norm{u}_{\widetilde{q}}\,\d t}
\\
&\lesssim \abs{ \int_0^{t_0} \norm{u}_{2}^{2\theta} {\norm{
    u}_{\infty}^{2(1-\theta)}}\norm{\nabla (u_m- u)}_q \,\d t} 
\\
&\lesssim \abs{ \int_0^{t_0} \norm{u}_{2}^{2\theta} {\norm{ \nabla
    u}_{q}^{2(1-\theta)}}\norm{\nabla (u_m- u)}_q \,\d t} 
\\
&\lesssim \norm{ u}^{2\theta}_{\infty, 2}\norm{\nabla u}^{2(1-\theta)}_{p,q}  \norm{\nabla (u_m- u)}_{p,q},
\end{align*}
the last inequality being justified by use of the H\"older inequality in the time variable
with exponents $\beta_1,\beta_2$ such that $2(1-\theta)\beta_1=p$, and $\beta_2=p$, such
that $1/\beta_1+1/\beta_2=1$.  As in the previous cases, we can treat also the other term
in a similar way and conclude that that $\int_0^{t_0} ((u \cdot \nabla)\,u_m, u_m)\,\d t
\rightarrow \int_0^{t_0} (u \cdot\nabla u, u)\,\d t$, as $m\to+\infty$.
\end{itemize}

\medskip 

Collecting the above estimates we conclude the proof of~\eqref{eq:tesi 1}, which
is the most crucial step. The other terms will be treated below, but their analysis
follows more classical path-lines, already outlined in previous works.

\bigskip

Next, for the sake of completeness, we show that the first two terms from the right hand
side of~\eqref{eq:to-prove} converge to zero.  This follows easily using the usual
properties of mollifiers together with similar estimates as above, distinguishing among
the three different ranges.  In case $(i)$, going through the same lines of the preceding
estimates and with same notation, indeed we can bound the first integral from the right
hand side %
as follows
\begin{align*}
  &\hspace{-1cm}\abs{ \int_0^{t_0} ((u \cdot \nabla)\,u, (u_m)_\varepsilon-(u)_\varepsilon)\,\d t} 
  \\
  &\leq {\int_0^{t_0}\norm{u}_{\widetilde{q}} \norm{\nabla u}_q
    \norm{(u_m)_\varepsilon-(u)_\varepsilon}_{\widetilde{q}}\,\d t} 
  \\
  &\leq { \int_0^{t_0} \norm{u}_{q^\ast}^{\theta} \norm{u}_{6}^{1-\theta} 
    \norm{\nabla u}_q  \norm{(u_m)_\varepsilon-(u)_\varepsilon}_{q^\ast}^{\theta}
    \norm{(u_m)_\varepsilon-(u)_\varepsilon}_{6}^{1-\theta} \,\d t}
  \\
  &\lesssim { \int_0^{t_0} \norm{\nabla u}_{q}^{1+\theta} \norm{ \nabla u}_{2}^{1-\theta}
    \norm{\nabla ((u_m)_\varepsilon-(u)_\varepsilon)}_q^{\theta} \norm{\nabla
      ((u_m)_\varepsilon-(u)_\varepsilon)}_2^{1-\theta}\,\d t}
  \\
  &\lesssim \norm{\nabla u}^{1+\theta}_{p,q} \norm{ \nabla u}^{1-\theta}_{2, 2} \norm{\nabla
    ((u_m)_\varepsilon-(u)_\varepsilon)}^{\theta}_{p,q} \norm{\nabla
    ((u_m)_\varepsilon-(u)_\varepsilon)}^{1-\theta}_{2,2}, 
\end{align*}
where we have applied H\"older inequality in the time variable with exponents
$\eta_1,\eta_2,\eta_3$ such that $(1+\theta)\eta_1=p$, $(1-\theta)\eta_2=2$, and $\theta
\eta_3=p$, and satisfying $1/\eta_1+2/\eta_2+1/\eta_3=1$.  Now, Lemma~2.5
in~\cite{Gal2011} ensures that
\begin{equation*}
  \lim_{m\rightarrow \infty}\norm{\nabla ((u_m)_\varepsilon-(u)_\varepsilon)}_{p,q}=0
  \qquad\text{and}\qquad 
  \lim_{m\rightarrow \infty} \norm{\nabla ((u_m)_\varepsilon-(u)_\varepsilon)}_{2,2}=0, 
\end{equation*}
which, together with the previous estimate, implies that for each fixed $\varepsilon>0$ it holds.
\begin{equation*}
  \lim_{m\to+\infty}\int_0^{t_0} ((u \cdot \nabla)\,u, (u_m)_\varepsilon-(u)_\varepsilon)\,\d t =0.
\end{equation*}
Cases $(ii)$ and $(iii)$ can be treated similarly and we skip the proof which is now
straightforward.

The very same approach allows to bound the integral $\int_0^{t_0} ((u \cdot
\nabla)\,u,(u)_\varepsilon-u)\,\d t $ as follows

\begin{align*}
  &\hspace{-1cm}\abs{ \int_0^{t_0} ((u \cdot \nabla)\,u, (u)_\varepsilon-u)\,\d t} 
\lesssim \norm{\nabla u}^{1+\theta}_{p,q} \norm{ \nabla u}^{1-\theta}_{2, 2} \norm{\nabla
    (u)_\varepsilon-u}^{\theta}_{p,q} \norm{\nabla
    (u)_\varepsilon-u)}^{1-\theta}_{2,2}, 
\end{align*}
which converge to zero as $\varepsilon\to0^+$ once more thanks to
Lemma~2.5~in~\cite{Gal2011}.

By collecting all results we finally get  that
\begin{equation*}
\lim_{\varepsilon\to0^+}\lim_{m\to+\infty}\int_0^{t_0} ((u \cdot \nabla)\,u,
(u_m)_\varepsilon)\,\d t=0. 
\end{equation*}
Passing to the limit as $m\to+\infty$ in \eqref{eq:regularised_equality} we then obtain
\begin{equation*}
  \begin{aligned}
    (u(t_0), (u)_\varepsilon(t_0))&= (u_0, (u)_\varepsilon(0))
+\int_0^{t_0} \left[\left(u,\frac{\partial(u)_\varepsilon}{\partial t}\right)- ((u
      \cdot \nabla)\, u, (u)_\varepsilon)- (\nabla u, (\nabla
      u)_\varepsilon)\right]\,\d t.
  \end{aligned}
\end{equation*}
and we now  let $\varepsilon \rightarrow 0^+$ and the convective term vanishes.  The term
involving the time derivative of $k_\varepsilon$, i.e. $\int_0^{t_0}
\left(u,\frac{\partial(u_m)_\varepsilon}{\partial t}\right)$ vanishes identically since
$k_\varepsilon$ is even. The usual properties of mollifiers from
\eqref{eq:properties-time-mollifier} imply that
in the limit $\varepsilon\to0^+$ we get
\begin{equation*}
  \norm{u(t_0)}^2+ 2\int_0^{t_0} \norm{\nabla u(s)}^2\,\d s = \norm{u_0}^2,
\end{equation*}
which is~\eqref{eq:energy-equality} for $t=t_0$. By the arbitrariness of $t_0$, the proof
of Theorem~\ref{thm:grad-ranges} is complete.
\end{proof}
\section{On the energy equality for distributional solutions.}
\label{sec:very-weak}
In this section we prove Theorem~\ref{thm:very-weak} concerning the energy equality for a
class of solution which do not satisfy a priori the energy inequality. The proof follows
by a duality argument as done in~\cite{Gal2018}, who treated the case $r=s=4$. Here, we
cover the full range of exponents by following a similar technique, but performing a more
detailed analysis of the regularity of the solution of the adjoint problem. We point out
that also in our case, the main point is to show that a very-weak solution (in the sense
of Definition~\ref{def:very-weak}) with initial datum in $H$ and such
that~\eqref{eq:Shinbrot} holds true becomes a Leray-Hopf weak solution, hence satisfies
the energy equality by the classical results from~\cite{Shi1974}. From the proof and from
the analysis performed in Sec.~\ref{sec:comparison-very-weak} it turns out that less
stringent hypotheses (as those considered in the Theorem~\ref{thm:grad-ranges}) are not
sufficient to conclude the proof, hence condition~\eqref{eq:Shinbrot} seems not improvable
for very weak solutions, at present. 

By following the same notation from~\cite{BG2004,Gal2018}, given
$g:(0,T)\times\Omega\to\R^3$ we define the function $\widetilde{g}$ as follows
\begin{equation*}
  \widetilde{g}(t,x):=g(T-t,x).
\end{equation*}

In the sequel we will need again to approximate in a suitable way functions defined on
$(0,T)\times\Omega\to\R^3$, mollifying also in the space variables. 
\begin{remark}
  In the case of the Cauchy problem the smoothing can be done simply by mollifying also in
  the space variables (as in \cite{Gal2018}) while in the case of an exterior domain one
  can use practically the same approach we propose, by using the result by Borchers and
  Sohr~\cite{BS1990} in the appropriate setting.
\end{remark}
For this purpose we define
\begin{equation*}
  \Omega_\varepsilon:=\left\{x\in\Omega\subset \R^3: \ |x|<\frac{1}{\varepsilon},\
    d(x,\partial\Omega)\geq2\varepsilon\right\},  
\end{equation*}
with $d(x,A)$ the Euclidean distance from $x\in \R^3$ to the closed set $A\subset \R^3$.
For small enough $\varepsilon>0$, the set $\Omega_\varepsilon$ turns out to be bounded and
non-empty. Then, by taking $\rho\in C^\infty_0(\R^3)$, non-negative, radial, and such that
$\int_{\R^3}\rho(x)\,dx=1$, we define as usual the scaled
$\rho_\varepsilon(x)=\varepsilon^{-3}\rho(x \varepsilon^{-1})$. Moreover, if $\chi_A$
denotes the indicator function of the measurable set $A\subset\R^3$, we define, for any
$v\in (L^1_{loc}(\Omega))^3$, the function $^{\varepsilon}v(x)\in (C^\infty_0(\Omega))^3$ as follows
\begin{equation*}
^{\varepsilon}v(x):=[(v\ \chi_{\Omega_\varepsilon})*\rho_\varepsilon](x).
\end{equation*}
It follows that $^{\varepsilon}v$ is smooth and with compact support by the standard
  properties of mollifiers, but $\nabla\cdot (^{\varepsilon}v)\not=0$. For the energy
  estimates used later on we need a divergence-free approximation, hence one possibility
  is to use the Bogovski\u{\i}~\cite{Bog1980} operator $\mathcal{B}[f]$ defined as follows
  \begin{equation*}
          \mathcal{B}[f](x):=\int_{\Omega}f(y)\left[\frac{x-y}{|x-y|^{n}}\int_{|x-y|}^{+\infty}\psi\left(y+
          \xi\frac{x-y}{|x-y|}\right)\xi^{n-1}d\xi\right]\,dy, 
  \end{equation*}
  where $\psi\in C^\infty_0(\R^3)$, with support in the unit ball and
  $\int_{\R^3}\psi(y)\,dy=1$, when $\Omega$ contains the unit ball. (Some changes with
  scaling and translations are needed if this is not satisfied).

  It is well-known that if $\Omega$ is smooth and bounded and if $f\in
  C^\infty_0(\Omega)$, then $\mathcal{B}[f]\in C^\infty_0(\Omega)$ and
\begin{equation*}
  \nabla\cdot \mathcal{B}[f](x)=f(x)-\frac{1}{|\Omega|}\int_\Omega f(y)\,\d y.
\end{equation*}
In addition, if $f\in L^s(\Omega)$, with $1<s<\infty$, then $\mathcal{B}[f]\in
W^{1,s}_0(\Omega)$ and moreover it holds (for fixed $\psi$) that $\|f\|_{W^{1,s}}\leq
C(s,\Omega)\|f\|_{L^s}$, see for instance the review in Galdi~\cite{Gal2011}.  We thus
define,
\begin{equation*}
  v^{(\varepsilon)}(x):= {^{\varepsilon}v}(x)-\mathcal{B}[\nabla\cdot(^{\varepsilon}v)](x),
\end{equation*}
and since $\int_\Omega \nabla\cdot (^{\varepsilon}v)\,\d x=\int_{\partial\Omega}
{^{\varepsilon}v}\cdot n\,\d S=0$, it follows that if $v\in L^s(\Omega)$, then we have the
following relevant properties
\begin{itemize}
\item[i)] $v^{(\varepsilon)}(x) \in C^\infty_0(\Omega)$, with $\nabla \cdot
  v^{(\varepsilon)}=0$;
\item[ii)] $\|v^{(\varepsilon)}(x)\|_{L^s}\leq C(s,\Omega)\|v\|_{L^s}$, uniformly in $\varepsilon>0$;
\item[iii)] $v^{(\varepsilon)}\to v \text{ in }L^s(\Omega)$, as $\varepsilon\to0^+$.
\end{itemize}

We apply this procedure to the initial datum $u_0\in L^2(\Omega)$ and we define the family
of functions $\{u^{(\varepsilon)}_0\}_{\varepsilon>0}\subset C^\infty_0(\Omega)$ in such a
way that
\begin{equation*}
\nabla\cdot u_0^{(\varepsilon)}=0\qquad\text{ and also }\qquad
\lim_{\varepsilon\to0^+}\|u_0^{(\varepsilon)}-u_0\|=0. 
\end{equation*}
We also need to apply a similar smoothing procedure to space-time functions. Hence, given
$u\in L^r(0,T;L^s(\Omega))$ we define $u^{(\varepsilon)}$ with the above technique
obtaining a family $u^{(\varepsilon)}$ of functions which are divergence-free and smooth
in the space variables. To regularize also in the time variable we use the same
Friederichs mollifier as introduced in~\eqref{eq:mollification} and we define, for
$0<\epsilon<T$ 
\begin{equation*}
  u_{(\varepsilon)}(t,x):=\int_0^{T} k_\varepsilon(t-\tau) u^{(\varepsilon)}(\tau,x)\,\d\tau,
\end{equation*}
in such a way that 
\begin{equation}
  \label{eq:smoothed-u}
  \begin{aligned}
    i)&\quad u_{(\varepsilon)}(t,x) \in L^\infty((0,T)\times \Omega),
    \\
    ii)&\quad\nabla \cdot
    u_{(\varepsilon)}=0,
    \\
iii)&\quad u_{(\varepsilon)}\to u \text{ in }L^r(0,T;L^s(\Omega)),\text{ as }\varepsilon\to0^+.
  \end{aligned}
\end{equation}
It turns out that $u_{(\varepsilon)}$ and also $\widetilde{u_{(\varepsilon)}}$ (that one
with reversed time) are then suitable' vector fields to be used as a convection term in
Oseen-type systems, in order to obtain global solution in the energy space, that is in
$L^\infty(0,T;H)\cap L^2(0,T;V)$.

With this regularization at disposal, we can in fact start by considering
$(u^{\varepsilon},p^\varepsilon)$ solution of the linear Oseen problem
  \begin{equation}
    \label{eq:adjoint1}
    \begin{aligned}
      \partial_{t}u^\varepsilon-\Delta
      u^\varepsilon+({u_{(\varepsilon)}}\cdot\nabla)\,u^\varepsilon+\nabla
      p^\varepsilon&=0\qquad \text{in }(0,T)\times\Omega,
      \\
      \nabla \cdot u^\varepsilon&=0 \qquad \text{in }(0,T)\times\Omega,
      \\
      u^\varepsilon(t,x)&=0 \qquad \text{on }(0,T)\times\partial\Omega,
      \\
      u^\varepsilon(0,x)&=u_0^{(\varepsilon)} \qquad \text{in }\Omega,
    \end{aligned}
  \end{equation}
  and by the standard Galerkin method (combined with the method of invading domains if the
  domain is unbounded), it turns out that there exists a unique (distributional) solution 
  $(u^\varepsilon,p^\varepsilon)$ such that
\begin{equation*}
  u^\varepsilon\in W^{1,2}(0,T;H)\cap L^2(0,T;H^2\cap V)\subset
  C(0,T;H^1_0(\Omega))\qquad\text{and}\qquad \nabla p^\varepsilon\in L^2((0,T)\times\Omega). 
\end{equation*}
The main point is that $\big(({u_{(\varepsilon)}}\cdot\nabla)\,u^\varepsilon,
u^\varepsilon)\big)=0$ and $(\nabla p^\varepsilon,u^\varepsilon)=0$, coming from the
divergence free constraint on both $u_{(\varepsilon)}$ and $u^\varepsilon$; hence, one can
show that in the sense of $\mathcal{D}'(0,T)$
\begin{equation*}
  \frac{1}{2}\frac{d}{dt}\|u^\varepsilon\|^2+\nu\|\nabla u^\varepsilon\|^2=0,
\end{equation*}
and from this one can prove a first a priori estimate and then use the standard
theory for linear equation systems of Stokes type to prove existence and regularity of
the solution.

By the definition of very-weak solution we obtain that the following identity is satisfied
by the difference $u-u^\varepsilon$
\begin{equation}
  \label{eq:difference}
  \int_0^T(u-u^\varepsilon,\partial_t \phi+\Delta\phi+(u_{(\varepsilon)}\cdot\nabla)\,\phi)\,\d
  s=\int_0^T((u-u_{(\varepsilon)})\cdot\nabla\,\phi,u)-(u_0-u_0^{(\varepsilon)},\phi(0)),
\end{equation}
for all $\phi\in \mathcal{D}_T$.

Let us consider now, for a given $f\in C^\infty_0((0,T)\times \Omega)$ the solution
$(w^\varepsilon,\pi^\varepsilon)$ of the linear Oseen problem
  \begin{equation}
    \label{eq:adjoint2}
    \begin{aligned}
      \partial_{t}w^\varepsilon-\Delta w^\varepsilon-(\widetilde{u_{(\varepsilon)}}\cdot\nabla)\,w^\varepsilon+\nabla
      \pi^\varepsilon&=-\widetilde{f}\qquad \text{in }(0,T)\times\Omega, 
      \\
      \nabla \cdot w^\varepsilon&=0 \qquad \text{in }(0,T)\times\Omega,
      \\
      w^\varepsilon(t,x)&=0 \qquad \text{on }(0,T)\times\partial\Omega,
      \\
      w^\varepsilon(0,x)&=0 \qquad \text{in }\Omega,
    \end{aligned}
  \end{equation}
and define 
\begin{equation*}
  \Psi^\varepsilon(t,x):=\widetilde{w^\varepsilon}(t,x)={w^\varepsilon}(T-t,x)\qquad\text{and}\qquad
  \Xi^\varepsilon(t,x):=\widetilde{\pi^\varepsilon}(t,x)={\pi^\varepsilon}(T-t,x),
\end{equation*}
in such a way that they have the same regularity as $(w^\varepsilon,\pi^\varepsilon)$ and they solve the
\textit{final value problem}
  \begin{equation}
    \label{eq:final}
    \begin{aligned}
      \partial_{t}\Psi^\varepsilon+\Delta
      \Psi^\varepsilon+(u_{(\varepsilon)}\cdot\nabla)\,\Psi^\varepsilon-\nabla \Xi^\varepsilon &=f\qquad
      \text{in }(0,T)\times\Omega, 
      \\
      \nabla \cdot \Psi^\varepsilon&=0 \qquad \text{in }(0,T)\times\Omega,
      \\
     \Psi^\varepsilon(t,x)&=0 \qquad \text{on }(0,T)\times\partial\Omega,
      \\
      \Psi^\varepsilon(T,x)&=0 \qquad \text{in }\Omega.
    \end{aligned}
  \end{equation}
  By the standard theory of Galerkin methods it follows again that
 \begin{equation*}
   \Psi^\varepsilon\in W^{1,2}(0,T;H)\cap L^2(0,T;H^2\cap V)\subset
   C(0,T;H^1_0(\Omega))\qquad\text{and}\qquad  \nabla \Xi^\varepsilon\in L^2((0,T)\times
   \Omega),  
\end{equation*}
and we would like to use $\Psi^\varepsilon$ as test function in~\eqref{eq:difference},
instead of $\phi\in \mathcal{D}_T$. This can be done by using standard density arguments
to approximate $\Psi^\varepsilon$ in $\mathcal{D}_T$ with uniform bounds in $\epsilon>0$,
provided that $\Psi^\varepsilon$ is smooth enough independently of $\epsilon$ to make all
integrals well-defined.
\begin{proof}[Proof of Theorem~\ref{thm:very-weak}]
  The proof follows by considering \eqref{eq:difference} with test function given by
  $\Psi^\varepsilon$. Formally (calculations would be rigorous by using a smooth
  approximation of $\Psi^\varepsilon$ and this can be constructed by standard density
  arguments, as recalled in the appendix of \cite{Gal2018}) we obtain that
\begin{equation}
  \label{eq:difference2}
  \int_0^T(u-u^\varepsilon,f+\nabla\Xi^\varepsilon)\,\d s
=\int_0^T((u-u_{(\varepsilon)}\cdot\nabla)\,\Psi^\varepsilon,u)\,\d s
-(u_0-u_0^{(\varepsilon)},\Psi^\varepsilon(0)), 
\end{equation}
but the argument will be completely rigorous after we have proved that
$(\Psi^\varepsilon,\Xi^\varepsilon)$ is smooth enough. This will come by a proper use of
the maximal regularity arguments for the Stokes system and by standard density arguments

First, we need to prove that the identity
\begin{equation*}
    \int_0^T(u-u^\varepsilon,\nabla\Xi^\varepsilon)\,\d s=0,
\end{equation*}
is valid, showing enough regularity on $\Xi^\varepsilon$ and using the fact that $\nabla\cdot
(u-u^\varepsilon)=0$ in $\mathcal{D}'(\Omega)$ a.e $t\in(0,T)$.  Next, by taking the limit
as $\varepsilon\to0^+$, the uniform bound in $L^2(\Omega)$ of $\Psi^\varepsilon(0)$
implies that
\begin{equation*}
  \lim_{\varepsilon\to0^+}(u_0-u_0^{(\varepsilon)},\Psi^\varepsilon(0))=0.
\end{equation*}
Since $u\in L^r(0,T;L^s(\Omega))$, then $u-u_{(\varepsilon)}\to0$ in
$L^r(0,T;L^s(\Omega))$ and
\begin{equation*}
  \lim_{\varepsilon\to0^+}\int_0^T((u-u_{(\varepsilon)}\cdot\nabla)\,\Psi^\varepsilon,u)\,\d
  s=0,
\end{equation*}
provided that $\big|\int_0^T((u-u_{(\varepsilon)}\cdot\nabla)\,\Psi^\varepsilon,u)\,\d
s\big|<\infty$, and it is at this point that we will use the
assumption~\eqref{eq:Shinbrot} and its consequences on the regularity of the Oseen system.

In particular, we have to show that 
\begin{equation*}
 \nabla \Xi^\varepsilon,\ ((u-u_{(\varepsilon)}\cdot\nabla)\,\Psi^\varepsilon\in L^{r'}(0,T;L^{s'}(\Omega))
\end{equation*}
or equivalently, by recalling the link between $r$ and $s$ from~\eqref{eq:Shinbrot},
\begin{equation}
  \label{eq:final_estimate}
  \nabla \Xi^\varepsilon,\ ((u-u_{(\varepsilon)}\cdot\nabla)\,\Psi^\varepsilon\in
  L^{\frac{2s}{2+s}}(0,T;L^{\frac{s}{s-1}}(\Omega)), 
\end{equation}
the second coming from the condition $\nabla\Psi^\varepsilon\in
L^{\frac{s}{2}}(0,T;L^{\frac{s}{s-2}}(\Omega))$ by using the H\"older inequality.

Once the above steps are done, one can approximate $ \Xi^\varepsilon$ and
$\Psi^\varepsilon$ by a family of smooth compactly supported functions
$\Theta^{\varepsilon}_{\eta}\in \mathcal{D}_{T}$ and $\Upsilon^{\varepsilon}_{\eta}\in
C^{\infty}_{0}((0,T)\times\Omega)$, with uniform bounds in $\eta>0$ in the spaces
$L^{r'}(0,T;W^{1,s'}(\Omega))$ and $L^{\frac{s}{2}}(0,T;W^{1,\frac{s}{s-2}}(\Omega))$,
respectively. These smooth functions $\Theta^{\varepsilon}_{\eta}$ and
$\Upsilon^{\varepsilon}_{\eta}$, can be then employed as legitimate test functions in the
definition of very-weak solution. We do not explicitly write this since it will be clear
to the reader how to proceed, hence we simply show the requested regularity.

Once this step is performed, the uniform bounds of $u^\varepsilon$ in $L^\infty(0,T;H)\cap
L^2(0,T;V)$ imply --by standard results about weak compactness-- the existence of $U\in
L^\infty(0,T;H)\cap L^2(0,T;V)$ such that, along some sub-sequence $\{\varepsilon_n\}$
converging to $0^+$ as $n\to+\infty$
\begin{equation*}
  \begin{aligned}
    & u^{\varepsilon_n}\overset{*}{\rightharpoonup} U\qquad\text{weakly* in }
    L^\infty(0,T;H)\subset L^\infty(0,T;L^2(\Omega)),
    \\
    & u^{\varepsilon_n}\rightharpoonup U\qquad\text{weakly in } L^2(0,T;V)\subset
    L^2(0,T;H^1_0(\Omega)).
  \end{aligned}
\end{equation*}
hence the taking the limit as $n\to+\infty$ in~\eqref{eq:difference2} considered with
$\varepsilon=\varepsilon_n$ we obtain
\begin{equation*}
\int_0^T(u-U,f)\,\d
s\overset{n\to+\infty}{\longleftarrow}\int_0^T(u-u^{\varepsilon_n},f)\,\d s
\overset{n\to+\infty}{\rightarrow}0 
  \qquad\qquad \forall\,f\in C^\infty_0((0,T)\times\Omega),
\end{equation*}
hence $\int_0^T(u-U,f)\,\d s=0$ and, by the arbitrariness of the smooth $f$, this shows
that $u$ can be identified with $U \in L^\infty(0,T;L^2(\Omega))\cap
L^2(0,T;H^1_0(\Omega))$. This proves that $u$ falls into the class of Leray-Hopf weak
solutions, for which the classical results from~\cite{Shi1974} imply the energy equality.

\bigskip

In order to conclude the proof we need now to justify the method and in particular the
fact that the integrals involving $\Psi^\varepsilon$ and $\Xi^\varepsilon$ are
well-defined, with the available regularity of $u$ from the
assumption~\eqref{eq:Shinbrot}. To this end, observe that the starting point of our
analysis is the fact that $\nabla w^\varepsilon\in L^2(0;T,L^2(\Omega))$ and also $u\in
L^r(0,T; L^s(\Omega))$ is a very-weak solution of~\eqref{eq:NS} with~\eqref{eq:Shinbrot}.

By using Propositions~\ref{prop:regularity} and \ref{prop:boot-strap}, we observe that
under the hypotheses of \eqref{eq:Shinbrot} (that is $\gamma=1/2$) it follows that
$(\alpha_0,\beta_0)=(2,2)$ and the $n$-th iteration of the maximal regularity results from
Proposition~\ref{prop:boot-strap} shows that, for $n\geq1$
\begin{equation*}
  (\widetilde{u_{(\varepsilon)}}\cdot\nabla)\,w^\varepsilon,\nabla \pi^\varepsilon\in
  L^{\alpha_n}(0,T;L^{\beta_n}(\Omega))\qquad\text{with
  }\beta_n=\frac{\beta_{n-1}\,s}{\beta_{n-1}+s}\qquad \text{and
  }\frac{1}{\alpha_n}+\frac{1}{\beta_n}=\frac{3}{2}, 
\end{equation*}
which reduces, in terms of $\Phi^\varepsilon,\Xi^\varepsilon$, and more explicitly in
terms of the parameter $s>4$, to 
\begin{equation*}
  \alpha_{n}={\frac{s}{s-n}}\qquad\text{and}\qquad \beta_{n}={\frac{2s}{2n+s}}
\end{equation*}
and to have then 
\begin{equation}
  \label{eq:aaaaaaa}
  (\widetilde{u_{(\varepsilon)}}\cdot\nabla)\,\Psi^\varepsilon,\nabla \Xi^\varepsilon\in
  L^{\frac{s}{s-n}}(0,T;L^{\frac{2s}{2n+s}}(\Omega))\qquad\text{uniformly in
  }\varepsilon>0.
\end{equation}
Observe that the norms of the above functions depend on the number $n\in \N$ of
applications of the maximal parabolic regularity, on the domain $\Omega$,  on the value of
$s>4$, on the norm $\|u\|_{r,s}$ (and the other data of the problem), but are independent
of the relevant quantities. 

We note that the case $s=4$ has been treated in~\cite{Gal2018}, hence we will not study it
(even if comes after a single simpler step). 

We distinguish between two cases depending on the fact that $s\in \R$ is or not an even
number. 

In the case $s>4$ is an even number, that is  $s=2m$, with $2<m\in\N$, it follows that with $m-1$
steps it holds
\begin{equation*}
\alpha_{m-1}=\frac{2s}{2+s}\qquad\text{and}\qquad   \beta_{m-1}=\frac{s}{s-1}, 
\end{equation*}
that is exactly the regularity from~\eqref{eq:final_estimate}.

In the case $s>4$ is not an even integer, we observe that $\beta_{n}$ is monotonically
strictly decreasing, hence we determine $N\in \N$ solving the inequalities
\begin{equation*}
  1<\beta_{{N+1}}< \frac{s}{s-1}\leq \beta_{N},
\end{equation*}
obtaining 
\begin{equation*}
 \frac{s}{2}-2\leq N<\frac{s}{2}-1
\end{equation*}
and the corresponding  value of $N\in \N$ is 
\begin{equation*}
  N:=\left[\frac{s}{2}\right]-1,
\end{equation*}
where $[x]$ denotes as usual the integer part of $x\in \R$.

At this point we can interpolate to obtain 
\begin{equation*}
  L^{\frac{s}{s-1}}(\Omega)=\left[L^{\beta_{N}}(\Omega),L^{\beta_{N+1}}(\Omega)\right]_{\theta}
  \qquad\text{with}\qquad \theta=  \frac{1}{2} \left(2 \left[\frac{s}{2}\right]-s+2\right) 
\end{equation*}
and consequently to obtain 
  with the same $\theta=  \frac{1}{2} \left(2 \left[\frac{s}{2}\right]-s+2\right)$
\begin{equation*}
\nabla \Xi,\ ((u-u_{(\varepsilon)}\cdot\nabla)\,\Psi^\varepsilon\in  \left[L^{\alpha_{N}}(0,T;L^{\beta_{N}}(\Omega)),L^{\alpha_{N+1}}(0,T;L^{\beta_{N+1}}(\Omega))\right]_{\theta}=
  L^{\frac{2s}{2+s}}(0,T;L^{\frac{s}{s-1}}(\Omega)),  
\end{equation*}
since, by direct computation
$\frac{2+s}{2s}=\frac{\theta}{\alpha_{N}}+\frac{1-\theta}{\alpha_{N+1}}$, ending the proof.
\end{proof}

\end{document}